\documentclass[a4paper, 11pt]{amsart} 

\usepackage[svgnames]{xcolor} 


\usepackage[utf8]{inputenc} 
\usepackage[T1]{fontenc} 
\usepackage{PTSerif} 

\usepackage[linktocpage]{hyperref} 
\usepackage{amsmath}
\usepackage{amssymb}
\usepackage{amsthm}
\usepackage{blindtext}
\usepackage[margin = 1 in]{geometry}
\usepackage{mathtools}
\usepackage{graphicx}
\usepackage{tikz-cd}
\usepackage{footnote}
\usepackage{fancyhdr}
\usepackage{hyperref}
\usepackage{stmaryrd}
\usepackage{comment}
\usepackage{mathrsfs}

\usepackage{biblatex}
\addbibresource{reference.bib}

\usepackage{todonotes}
\makeatletter
\providecommand\@dotsep{5}
\renewcommand{\listoftodos}[1][\@todonotes@todolistname]{%
  \@starttoc{tdo}{#1}}
\makeatother

\usepackage[normalem]{ulem}
\hypersetup{
    colorlinks,
    linkcolor=blue,
    urlcolor=blue
}

\pagestyle{fancy}
\setlength{\headheight}{25pt}

\newtheorem{theorem}{Theorem}[section]
\newtheorem{corollary}[theorem]{Corollary}
\newtheorem{lemma}[theorem]{Lemma}
\newtheorem{proposition}[theorem]{Proposition}

\theoremstyle{definition}

\theoremstyle{remark}
\newtheorem*{remark}{Remark}

\graphicspath{ {images/} }

\newcommand{\mb}{\mathbb}

\newcommand{\mf}{\mathfrak}

\newcommand{\Q}{\mb{Q}}
\newcommand{\R}{\mb{R}}
\newcommand{\Z}{\mb{Z}}
\newcommand{\C}{\mb{C}}

\renewcommand{\P}{\mb{P}}

\newcommand{\eps}{\epsilon}
\newcommand*{\sheafhom}{\mathcal{H}\kern -.5pt om}

\DeclareMathOperator{\Gal}{Gal}

\DeclareMathOperator{\Aut}{Aut}

\DeclareMathOperator{\Sp}{Sp}
\DeclareMathOperator{\Ort}{O}
\DeclareMathOperator{\prim}{prim}
\DeclareMathOperator{\Sym}{Sym}

\DeclareMathOperator{\Spo}{SpO}
\DeclareMathOperator{\parity}{parity}
\DeclareMathOperator{\Var}{Var}

\setlength{\footskip}{13.0pt}

\lhead{Caleb Ji} 
\chead{Effective bounds}
\rhead{} 
\title{Effectivity for bounds on points of good reduction in the moduli space of hypersurfaces}
\author{Caleb Ji}
\date{\today}

\begin{document}
\maketitle 
\begin{abstract}
    Let $S$ be a finite set of primes.  For sufficiently large $n$ and $d$, Lawrence and Venkatesh proved that in the moduli space of hypersurfaces of degree $d$ in $\P^n$, the locus of points with good reduction outside $S$ is not Zariski dense.  We make this result effective by computing explicit values of $n$ and $d$ for which this statement holds.  We accomplish this by giving a more precise computation and analysis of the Hodge numbers of these hypersurfaces and check that they satisfy certain bounds. 
\end{abstract}

\section{Introduction} 
Given a family of varieties over a number field, a Shafarevich-type conjecture states that there are only finitely many with good reduction outside some finite set of primes.  The original versions, for abelian varieties of fixed dimension with principal polarization and for curves with fixed genus, were proven by Faltings \cite{Faltings} in his proof of the Mordell conjecture.  

Recently, Lawrence-Venkatesh \cite{LV} initiated a new approach toward these types of questions using p-adic Hodge theory and monodromy calculations.  This approach led to a new proof of the Mordell conjecture, and while it did not prove the original version of the Shafarevich conjecture, it is applicable to other versions of the Shafarevich conjecture.  In particular, they proved the following results. 

\begin{theorem}
    \cite[Theorem 10.1]{LV}
    \label{thm: 10.1}
    Let $\pi\colon X\rightarrow Y$ be a smooth proper morphism over $\Z[S^{-1}]$, whose fibers are geometrically connected of relative dimension $n-1$.  Suppose that
the monodromy representation has large image, and moreover that 
\begin{equation}
    \label{eqn: 1} 
    \sum_{p>0} h^p\ge h^0+\dim(Y)
\end{equation}
and 
\begin{equation}
    \label{eqn: 2} 
    \sum_{p>0} ph^p\ge T(h^0+\dim(Y)) + T\left( \frac{3}{2}h^0 + \dim(Y)\right). 
\end{equation} 
Then $Y(\Z[S^{-1}])$ is not Zariski dense in in $Y$. 
\end{theorem}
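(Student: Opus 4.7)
The plan is to implement the $p$-adic period method of Lawrence--Venkatesh. Fix an auxiliary prime $\ell \notin S$ of good reduction (written $\ell$ rather than $p$ to avoid clashing with the Hodge-filtration index), and for each $y \in Y(\Z[S^{-1}])$ form the Galois representation $\rho_y := H^{n-1}_{\et}(X_{y,\bar\Q}, \Q_\ell)$. These are unramified outside $S \cup \{\ell\}$ with weights bounded in terms of $n$, so a Faltings-type finiteness theorem yields only finitely many isomorphism classes $[\rho]$. It therefore suffices to show, for each such $[\rho]$, that the locus $\{y : \rho_y \simeq \rho\}$ is not Zariski-dense in $Y$.

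Restricting to a single residue disc $\Omega \subset Y(\Q_\ell)$, the crystalline comparison theorem trivializes the Frobenius $\phi$ acting on the de Rham cohomology of the fibers, while the Hodge filtration $\mathrm{Fil}^\bullet$ varies $\ell$-adically analytically in $y$. Two points in $\Omega$ with isomorphic Galois representations must have Hodge filtrations lying in a common orbit of the centralizer $Z(\phi)$ acting on the flag variety $\mathcal{F}$ of filtrations with Hodge numbers $h^p$. This yields an $\ell$-adic period map $\Phi \colon \Omega \to \mathcal{F}$ whose fiber over each $[\rho]$ is a finite union of $Z(\phi)$-orbits.

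One then counts dimensions. A standard computation bounds $\dim \mathcal{F}$ below by $\sum_{p>0} p h^p$, while each $Z(\phi)$-orbit has dimension at most $T h^0$ for a constant $T$ arising from the Frobenius slopes, and the additional orbits needed to cover the variation of integral structures contribute the correction term $T(\tfrac{3}{2} h^0 + \dim(Y))$. Inequality (2) then forces each fiber of $\Phi$ to have strictly positive codimension in $\mathcal{F}$, while the large-monodromy hypothesis combined with inequality (1) ensures that the image of $\Phi$ has Zariski-dimension equal to $\dim(Y)$ and is not contained in any single fiber. A Weierstrass-preparation or $\ell$-adic-analytic argument then shows that $\Phi^{-1}([\rho]) \cap \Omega$ lies in a proper analytic subset of $\Omega$, and covering $Y(\Q_\ell)$ by finitely many residue discs completes the proof.

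The main obstacle is the sharp control of $Z(\phi)$-orbit dimensions together with the number of orbits per fiber: Frobenius may have repeated eigenvalues or non-semisimple structure, and the choice of integral $\Z_\ell$-lattice inside $[\rho]$ can produce extra orbits. This is where the full Hodge-theoretic input is used and where the correction term $T(\tfrac{3}{2} h^0 + \dim(Y))$ in inequality (2) originates. The large-monodromy hypothesis is equally essential, since without it the image of $\Phi$ could conceivably be contained in a single $Z(\phi)$-orbit and the entire dimension-counting argument would collapse.
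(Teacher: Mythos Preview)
Your sketch has the right overall architecture (Faltings finiteness, $p$-adic period map, dimension count), but several of the key quantities are misidentified, and this is where the argument actually lives.

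First, $T$ is not a constant: it is the piecewise-linear function on $[0,\sum_j h^j]$ with $T(0)=0$ and slope $p_{\max}-k$ on the $k$th successive interval of length $h^{p_{\max}-k}$. In words, $T(E)$ is the weighted sum of the top $E$ adjoint Hodge numbers. So ``$T h^0$'' as a product makes no sense; the expressions $T(h^0+\dim Y)$ and $T(\tfrac32 h^0+\dim Y)$ in (\ref{eqn: 2}) are evaluations of this function. Relatedly, the $h^p$ appearing in the statement are the \emph{adjoint} Hodge numbers of the Lie algebra $\C\oplus\Sym^2 V_y$ or $\C\oplus\wedge^2 V_y$, not the Hodge numbers of the fibers themselves; your write-up never distinguishes these.

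Second, you have swapped the roles of the two inequalities. The dimension of the isotropic flag variety $\mathfrak h^*$ is $\sum_{p>0} h^p$ (not $\sum_{p>0} p h^p$), and the Frobenius centralizer has dimension at most $h^0$; the $p$-adic Bakker--Tsimerman theorem then says that the preimage under $\Phi_p$ of a subvariety of codimension $\ge \dim Y$ is not Zariski dense. This is exactly condition (\ref{eqn: 1}). Condition (\ref{eqn: 2}) is not about orbit dimensions or integral lattices at all: it is the numerical input needed to handle \emph{semisimplification}, since Faltings only controls $\rho_y^{\mathrm{ss}}$. Tracing what it means for two points to have the same semisimplification through the period map produces the function $T$ and the bound $\sum_{p>0} p h^p \ge T(h^0+\dim Y)+T(\tfrac32 h^0+\dim Y)$; see \cite[Lemma 10.5]{LV}.

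Finally, the analytic step is not a bare Weierstrass-preparation argument but the $p$-adic Ax--Schanuel/Bakker--Tsimerman transcendence theorem, which is what converts ``codimension $\ge \dim Y$ in the target'' into ``not Zariski dense in $Y$''. Without it the dimension count does not by itself preclude the image of $\Phi_p$ from landing in a bad subvariety.
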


We will explain the meaning of these conditions in Section \ref{sec: background}.  In the moduli space of hypersurfaces in $\P^n$ with fixed degree $d$, the monodromy condition is generally satisfied, so that the following result implies Zariski non-density. 

\begin{proposition}
\label{prop: 10.2}
\cite[Proposition 10.2]{LV}
     There exists $n_0$ and a function $D_0(n)$ such that both (\ref{eqn: 1}) and (\ref{eqn: 2})
apply to $X\rightarrow Y$ the universal family of hypersurfaces in $\P^n$ of degree $d$, so long as
$n > n_0$ and $d > D_0(n)$. 
\end{proposition}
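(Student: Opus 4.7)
The plan is to apply Griffiths' description of the Hodge filtration on the middle cohomology of a smooth hypersurface via its Jacobian ring. For a smooth degree $d$ hypersurface $X = V(f) \subset \P^n$, the primitive Hodge number $h^p := h^{p,n-1-p}_{\prim}(X)$ equals $\dim (R_f)_{(n-p)d - (n+1)}$, where $R_f = \C[x_0,\ldots,x_n]/(\partial_0 f, \ldots, \partial_n f)$. Since $X$ is smooth, the partials form a regular sequence and the Hilbert series reads
\[
\sum_m \dim (R_f)_m\, t^m \;=\; \left(\frac{1 - t^{d-1}}{1-t}\right)^{n+1}.
\]
So every $h^p$ depends only on $n$ and $d$, and checking (\ref{eqn: 1}) and (\ref{eqn: 2}) reduces to coefficient estimates on this generating polynomial.

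The next step is to extract the asymptotics as $d \to \infty$ with $n$ fixed. By the Gorenstein symmetry of $R_f$, with socle degree $(n+1)(d-2)$, one has $h^0 = h^{n-1} = \dim (R_f)_{d-n-1} = \binom{d-1}{n}$, of order $d^n/n!$. The full primitive middle Betti number $\sum_p h^p = b^{\prim}_{n-1}(X)$ grows like $d^n$, so the bulk of the Hodge spectrum sits at the middle indices and $h^0$ only carries a $1/n!$ fraction of the total mass. The moduli dimension is $\dim Y = \binom{n+d}{d} - (n+1)^2 \sim d^n/n!$, so a leading-coefficient comparison reduces (\ref{eqn: 1}) to the asymptotic inequality $1 - 1/n! \geq 2/n!$, which holds for $n \geq 3$.

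For (\ref{eqn: 2}) I would use the Hodge symmetry $h^p = h^{n-1-p}$ to write
\[
\sum_{p>0} p\, h^p \;=\; \frac{n-1}{2}\sum_{p>0} h^p + \text{(lower order)},
\]
so the left side has order $\tfrac{n-1}{2} d^n$ while the right side is a bounded multiple of $d^n/n!$ depending on $T$. Hence (\ref{eqn: 2}) also holds asymptotically, and combining the two conditions produces an explicit $n_0$ together with a function $D_0(n)$.

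The main obstacle will be converting these leading-order asymptotics into genuinely effective bounds. The dimensions $\dim (R_f)_m$ are not single closed-form polynomials in $d$: inclusion-exclusion on the numerator $(1-t^{d-1})^{n+1}$ gives
\[
\dim (R_f)_m \;=\; \sum_{k \geq 0} (-1)^k \binom{n+1}{k} \binom{n + m - k(d-1)}{n},
\]
with the convention that terms with negative upper binomial argument are zero, and for middle values of $m$ several terms contribute with cancellation. Bounding the error between each $h^p$ and its leading-order approximation uniformly in $n$ and $d$ is the technical heart of the argument, and it is what determines the concrete values of $n_0$ and $D_0(n)$.
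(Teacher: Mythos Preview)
There is a genuine gap at the very first step: the quantities $h^p$ appearing in conditions (\ref{eqn: 1}) and (\ref{eqn: 2}) are \emph{not} the primitive Hodge numbers $h^{p,n-1-p}_{\prim}(X)$ of the hypersurface. They are the \emph{adjoint Hodge numbers}, i.e.\ the Hodge numbers of the weight-zero Hodge structure on the Lie algebra $\C\oplus\Sym^2 V_y$ or $\C\oplus\wedge^2 V_y$ of the monodromy group. Concretely (see equation~(\ref{eq: hp})),
\[
2h^p \;=\; \sum_{p_1+p_2=p+n-1} h^{p_1,n-1-p_1}\,h^{p_2,n-1-p_2}\;\pm\;h^{(p+n-1)/2,(-p+n-1)/2},
\]
so each $h^p$ is essentially a convolution of the hypersurface Hodge numbers with themselves; the index $p$ ranges over $-(n-1),\ldots,n-1$ with the symmetry $h^p=h^{-p}$, not $h^p=h^{n-1-p}$. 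In particular $h^0$ is of order $d^{2n}$, not $d^n/n!$, and your asymptotic reduction of (\ref{eqn: 1}) to ``$1-1/n!\ge 2/n!$'' is computing the wrong thing.

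Once you use the correct $h^p$, the argument changes substantially. The paper (following \cite{LV}) observes that as $d\to\infty$ the normalized hypersurface Hodge numbers converge to the Eulerian distribution $A(n,p)/n!$, so the normalized adjoint Hodge numbers $h^p/H$ converge to the self-convolution $\beta_p=P(X'_n=p)$ of the Eulerian distribution. One then needs an \emph{upper} bound on $\beta_0$ (Lemma~\ref{lem: upp}, via log-concavity and the second-moment identity $\sum_i i^2\beta_i=(n+1)/6$) and a \emph{lower} bound on $\sum_{p>0}p\beta_p$ (Lemma~\ref{lem: worse low}, via the Irwin--Hall distribution). The function $T$ is handled by splitting at a threshold $\epsilon n$ and using the variance bound above the threshold; your treatment of $T$ as ``a bounded multiple of $d^n/n!$'' does not engage with its definition. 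The Jacobian-ring description you start with is correct and useful input, but the substance of the proof lies in analyzing the \emph{convolved} distribution, not the Hodge numbers themselves.
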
 

While this result only gives Zariski non-density rather than finiteness, it is plausible that in some scenarios it can be iterated to reach finiteness; see \cite{LS} for an instance where this idea is successfully carried out.  

The goal of this paper is to make Proposition \ref{prop: 10.2} effective.  We prove the following results. 

\begin{theorem}
\label{thm: effective}
For all $n > 2000$, there exists some degree $d(n)$ such that the hypersurfaces of degree $d(n)$ in $\P^n$ with good reduction outside $S$ are not Zariski dense in their moduli space.  [Proposition \ref{prop: n eff}]

Furthermore, for all $d\ge n>500000$, the hypersurfaces of
degree $d$ in $\P^n$ with good reduction outside $S$ are not Zariski dense in their moduli space. [Proposition \ref{prop: n d effective}] 
\end{theorem}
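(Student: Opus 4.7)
The plan is to verify the two hypotheses (\ref{eqn: 1}) and (\ref{eqn: 2}) of Theorem \ref{thm: 10.1} explicitly for the universal family $X\to Y$ of degree-$d$ hypersurfaces in $\P^n$. The large monodromy hypothesis in this setting is already known (and is invoked in the statement of Proposition \ref{prop: 10.2}), so what remains is a purely combinatorial inequality problem: produce explicit $(n,d)$ for which both inequalities hold.

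The first step is to make each quantity explicit. The primitive middle Hodge numbers of a smooth degree-$d$ hypersurface in $\P^n$ are given by Griffiths' residue formula as graded dimensions
\[
h^p \;=\; \dim_\C \bigl(\C[x_0,\ldots,x_n]/J(F)\bigr)_{(n-p)d-(n+1)},
\]
where $J(F)$ is the Jacobian ideal of a generic defining form $F$. The Hilbert series of $\C[x_0,\ldots,x_n]/J(F)$ is known in closed form, so each $h^p$ becomes a signed sum of binomial coefficients $\binom{n+d-kd-m}{n}$. Combined with $\dim Y = \binom{n+d}{d}-(n+1)^2$, inequality (\ref{eqn: 1}) becomes an explicit inequality between two alternating binomial sums, and (\ref{eqn: 2}) becomes a similar inequality weighted by $p$ (with $T$ a fixed constant depending on $S$).

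My approach is to reduce both parts of the theorem to monotonicity statements in $d$ for the ratio $R(n,d) := \mathrm{LHS}/\mathrm{RHS}$ of (\ref{eqn: 2}), which is the tighter of the two constraints. Once the appropriate monotonicity in $d$ is known, the second part of the theorem (uniform in $d\ge n$) reduces to checking $R(n,n)\ge 1$ for all $n>500000$, since $d=n$ is the worst case; the weaker first part of the theorem is obtained by choosing $d$ as an optimized function of $n$, which allows the threshold on $n$ to drop to $2000$. Inequality (\ref{eqn: 1}) should then follow comfortably from the same estimates, since its left-hand side is a sum of $n-1$ Jacobian graded pieces while the right-hand side has only a single dominant binomial $\binom{n+d}{d}$.

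The main obstacle is pinning down the concrete thresholds $2000$ and $500000$ rather than much larger effective constants. This requires sharp, non-asymptotic estimates for ratios of the binomial coefficients appearing in the Jacobian Hilbert series, together with careful control of the weighted sum $\sum_{p>0} p h^p$ obtained by isolating the dominant contributions near $p\approx n/2$. The multiplicative slack introduced by $T$ enters the final thresholds, so the estimates must be tight enough to absorb it. I expect the bulk of the work to be bookkeeping rather than conceptual, but extracting the specific thresholds stated in Theorem \ref{thm: effective} will require tuning the choice of $d(n)$ in the first part and uniformly sharp inequalities in the second.
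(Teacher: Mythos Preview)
Your proposal rests on a misreading of the numerical condition (\ref{eqn: 2}). You treat $T$ as ``a fixed constant depending on $S$'' that multiplies $h^0+\dim Y$, but in \cite{LV} (and as recalled in Section~\ref{sec: background} of this paper) $T$ is a piecewise-linear \emph{function} built from the adjoint Hodge numbers themselves: $T(E)$ is, roughly, the sum of the top $E$ weights in the adjoint Hodge structure. So condition (\ref{eqn: 2}) is self-referential---both sides depend on the full adjoint Hodge profile---and cannot be rewritten as a single inequality between two binomial sums with a multiplicative constant. Relatedly, the $h^p$ appearing in (\ref{eqn: 1}) and (\ref{eqn: 2}) are not the primitive Hodge numbers of the hypersurface but the \emph{adjoint} Hodge numbers, obtained by convolving the ordinary Hodge numbers with themselves (formula (\ref{eq: hp})). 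Your Griffiths-residue computation targets the wrong quantities.

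Because of this, the monotonicity-in-$d$ strategy does not get off the ground: $T$ changes shape with $d$, and there is no ``worst case $d=n$'' reduction available without first controlling how the whole adjoint distribution spreads. The paper's actual argument is probabilistic rather than term-by-term binomial. It views the (adjoint) Hodge numbers as a log-concave distribution $\{\beta_p\}$ (respectively $\{\gamma_p\}$ for finite $d$), proves an upper bound on the central mass $\beta_0$ from the known second moment via a log-concavity trick (Lemma~\ref{lem: upp}), and a lower bound on $\sum_{p>0}p\beta_p$ by comparing to the Irwin--Hall distribution and applying H\"older (Lemmas~\ref{lemma: IH}--\ref{lem: worse low}). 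The function $T$ is then bounded by splitting at a threshold $p=\epsilon n$: below the threshold one uses the slope bound $T'\le \epsilon n$, above it the variance bound. Optimizing $\epsilon$ is what produces the thresholds $2000$ and $500000$. None of this machinery appears in your plan, and without it the specific constants are out of reach.
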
  

The proof of Proposition \ref{prop: n eff}, which makes $n$ effective, follows the same lines as the proof of Proposition 10.2 of \cite{LV} except that explicit estimates are used in place of asymptotic results in some places.  However, we still use the convergence of the distribution of Hodge numbers to a distribution defined by Eulerian numbers as $d$ goes to infinity.  To make $d$ effective, in the proof of Proposition \ref{prop: n d effective} we work with the Hodge numbers directly.  The bounds obtained here are not optimized.  Often, we have chosen to use weaker bounds that simplify the analysis in order to illustrate the general method more clearly.

In \cite{Baldi}, the authors proved a case of the refined Bombieri-Lang conjecture whenever the hypotheses of Proposition \ref{prop: 10.2} hold.  This statement is now made effective through Theorem \ref{thm: effective}. 

The outline of this paper is as follows. 
In Section 2, we provide background on this approach and explain how it can be reduced to the verification of numerical conditions.  In Section 3 we prove explicit bounds regarding Eulerian numbers that allow us to make the dimension effective.  In Section 4, we compute the Hodge numbers of hypersurfaces and their statistics directly, which allows us to make the degree effective. 

\textbf{Acknowledgements}
\quad The author would like to thank Will Sawin for suggesting ideas that improved the strength of the bounds in multiple parts of this paper.  The author was partially supported by National Science Foundation Grant Number DGE-2036197.

\section{Background} 
\label{sec: background}
The purpose of this section is to explain the idea behind the proof of \cite[Theorem 10.1]{LV} and why it applies to the moduli space of hypersurfaces in $\P^n$ with fixed degree, modulo checking the numerical conditions involving the Hodge numbers.  For a thorough treatment with complete proofs, we refer the reader to \cite[Sections 9, 10, 11]{LV}. 

\subsection{Setup}

We begin with a smooth proper morphism $\pi\colon X\rightarrow Y$ over $\Z[S^{-1}]$ whose fibers are geometrically connected of relative dimension $n-1$\footnote{We choose the variable $n$ because the main example of interest will be the moduli space of hypersurfaces of degree $d$ in $\P^n$.}.   We wish to bound the set of integral points $Y(\Z[S^{-1}])$.  The morphism $\pi$ gives rise to a Hodge-Deligne system $R^{n-1}\pi_*\Q$ in the sense of \cite[Section 5]{LV}, which loosely consists of the data of four cohomology theories: a singular local system, an \'etale local system, a vector bundle, and a filtered isocrystal on $X$ that satisfy various compatibility isomorphisms.  

The stalk of the singular local system is given by the singular cohomology of the fibers $H^{n-1}(X_y, \Q)$.  The primitive part $V_y = H^{n-1}(X_y, \C)^{\prim}$ carries an intersection form and a polarized Hodge structure.  The monodromy representation (referred to in the statement of Theorem \ref{thm: 10.1}) is the natural mapping from the fundamental group of the base into the automorphism group
\[
\pi_1(Y) \rightarrow \Aut(H^{n-1}(X_y, \C)^{\prim}).
\]
The image is considered `large' if its Zariski closure contains the special orthogonal or symplectic group, whichever one preserves the intersection form. 

The Lie algebra of the generalized automorphism group of $V_y$ is given by either $\C\oplus \Sym^2V_y$ or $\C\oplus \wedge^2 V_y$, depending on the intersection form.  The Hodge structure on $V_y$ induces a weight 0 Hodge structure on this Lie algebra, known as the adjoint Hodge structure  Thus the Hodge numbers  of $V_y$ determine the \textit{adjoint Hodge numbers} $h^p$, the dimension of the $(p, -p)$ component in the adjoint Hodge structure.  This leads to the formula \cite[p. 57]{LV} 
\begin{equation}
\label{eq: hp}
2h^p = \sum_{p_1+p_2 = p+n-1} h^{p_1, n-1-p_1}h^{p_2, n-1-p_2}\pm h^{(p+n-1)/2, (-p+n-1)/2}   
\end{equation}

where we take the positive sign for if the intersection form is symplectic and the negative sign if it is orthogonal.  

\subsection{$p$-adic period maps and transcendence results}

Knowledge of the monodromy representation and the distribution of the adjoint Hodge numbers is used in the following way. 

The fiber at an integral point $y\in Y(\Z[S^{-1}])$ of the \'etale local system $R^{n-1}\pi_*\Q_l$ gives a Galois representation of the absolute Galois group 
\[
\rho_y\colon \Gal(\Q)\rightarrow H^{n-1}(X_y, \Q_l). 
\]
By a result of Faltings, there are only finitely many possibilities for the semisimplification of this representation.  The big monodromy results and numerical conditions will ensure that not too many distinct points give the same representation and deal with the issue of semisimplification. 

To analyze the variation of this representation with $y$, we pick a place $v$ and pass to the corresponding $p$-adic representations, which are crystalline.  
Then $p$-adic Hodge theory associates to these crystalline representations the data of a vector space with a semilinear Frobenius action and a Hodge filtration.  
The Gauss-Manin connection identifies the vector space and Frobenius of nearby fibers, but the $p$-adic period map determines how the Hodge filtration varies. 
 
Two points correspond to the same representation when an element of the centralizer of the Frobenius in the monodromy group transforms one Hodge filtration into the other.  By bounding the size of the Frobenius centralizer and enforcing appropriate numerical conditions, we can ensure this does not occur too often.

More precisely, given a $p$-adic disk $U_p$ around $y$ in $Y(\Q_p)$, the $p$-adic period map
\[
\Phi_p\colon U_p \rightarrow \mf{h}_{\Q_p}\subset \mf{h}^*_{\Q_p}
\]
is a $p$-adic analytic map from $U_p$ into a polarized period domain $\mf{h}_{\Q_p}$.  This can be viewed as a subset of the flag variety $\mf{h}^*_{\Q_p}$ parameterizing isotropic flags, those flags satisfying an appropriate orthogonality condition on its filtered components.  The $p$-adic version of the Bakker-Tsimerman transcendence theorem implies that the inverse image of a subvariety of $\mf{h}^*_{\Q_p}$ with codimension greater than or equal to that of $Y$ is not Zariski dense in $Y$. 

In our scenario, the flag variety is $\sum_{p>0}h^p$ and the dimension of the Frobenius centralizer can be bounded above by $h^0$ \cite[Lemma 10.4]{LV}.  This explains the first numerical condition in Theorem \ref{thm: 10.1}.  The second condition is used to deal with the issue of semisimplification, and is significantly more technical and restrictive.  It is obtained through carefully tracing out what it means for two representations to have the same semisimplification in terms of their period maps; for details see \cite[Lemma 10.5]{LV}.  The resulting condition is stated in terms of a function $T$, where $T(E)$ is loosely defined to be the sum of the sum of the $E$ topmost adjoint Hodge numbers.  For instance, if $p_{\max}$ is the largest $p$ with $h^p\neq 0$, then $T(1)=p_{\max}$.  More generally, 
\[
T\colon [0, \sum_{j\in \Z}h^j] \rightarrow \R_{\ge 0}
\]
is the unique piecewise-linear function satisfying $T(0)=0$ and 
\[
T'(x) = \begin{cases}
    p_{\max} & x\in (0, h^{p_{\max}}) \\ 
    p_{\max}-1 & x\in (h^{p_{\max}}, h^{p_{\max}} + h^{p_{\max}-1}) \\ 
    \cdots.
\end{cases}
\]
In our scenario, we let $X\rightarrow Y$ be the universal smooth hypersurface of degree $d$.  To obtain the desired Zariski non-density statement we need to compute these adjoint Hodge numbers $h^p$ from the Hodge numbers of the hypersurfaces and check they satisfy the numerical conditions listed in Theorem \ref{thm: 10.1}, and also check that the big monodromy statement is fulfilled.  The latter was already achieved in \cite{Beauville}.

\subsection{The monodromy group of the moduli space of hypersurfaces of fixed degree}
The monodromy group of the moduli space of smooth hypersurfaces in $\P^n$ of degree $d$ was calculated in \cite{Beauville}.  We state these results below.  

    Let $U_{n-1, d}$ be the moduli space of smooth hypersurfaces of degree $d$ in $\P^{n}$. 
 Let $u$ be a point in $U_{n-1, d}$ and let $X$ be the corresponding hypersurface.  Let $\Gamma_{n-1, d}$ be the image of the monodromy representation 
 \[
\rho\colon \pi_1(U_{n-1, d},u)\rightarrow \Aut(H^{n-1}(X, \Z)).
 \]

If $n$ is odd, let $h\in H^2(X, \Z)$ be the class of a hyperplane section and let $\Ort_h(H^{n-1}(X, \Z))\subset \Ort(H^{n-1}(X, \Z))$ be the subgroup preserving $h^{(n-1)/2}$.  Similarly, given a quadratic form $q$ let $\Spo(L, q)\subset \Sp(L)$ be the subgroup preserving $q$.
 
\begin{theorem}
\cite[Theorem 2]{Beauville} 
    If $n$ is odd and $d>4$, then $\Gamma_{n, d}=\Ort_h(H^n(X, \Z))$. 

    \cite[Theorem 4]{Beauville} 
    If $n$ is even and $d$ is even, then $\Gamma_{n, d}=\Sp(H^n(X, \Z))$.  If $n$ is even and $d$ is odd, then there exists on $H^n(X, \Z)$ a quadratic form$\pmod 2$ $q_X$ invariant by monodromy, and we have $\Gamma_{n, d}=\Spo(H^n(X, \Z), q_X).$
\end{theorem}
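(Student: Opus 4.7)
The plan is to prove this via Picard-Lefschetz theory, following the classical approach to monodromy of Lefschetz pencils. The containment of $\Gamma_{n,d}$ in $\Ort_h$, $\Sp$, or $\Spo$ is the easy half: the cup-product pairing on middle cohomology is preserved by any continuous family of diffeomorphisms, and when the hypersurface has even dimension the class $h^{(n-1)/2}$ pulled back from $\P^n$ is globally defined, so the monodromy factors through $\Ort_h$. In the symplectic case with $d$ odd, the more delicate upper bound requires constructing a Wu-type quadratic refinement $q_X$ of the mod $2$ intersection form from a Stiefel--Whitney class of $T_X$; the dependence on $d\bmod 2$ enters through the adjunction-formula computation of $w(T_X)$ in terms of the restriction of $\mathcal{O}_{\P^n}(d)$.

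The harder direction is producing enough monodromy to recover the full expected group. I would fix a Lefschetz pencil of degree $d$ hypersurfaces in $\P^n$ with only nodal singular fibers and invoke Picard-Lefschetz: around each critical value the local monodromy acts on the middle cohomology as a symplectic transvection or orthogonal reflection
\[
T_\delta(x) = x \pm \langle x,\delta\rangle\,\delta
\]
in a vanishing cycle $\delta$. Because the discriminant hypersurface in $U_{n-1,d}$ is irreducible, the $\pi_1(U_{n-1,d})$-orbit on the set of vanishing cycles is a single conjugacy class, and by the Lefschetz hyperplane theorem the vanishing cycles span the primitive cohomology.

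The remainder is purely group-theoretic: identify the subgroup of the target classical group generated by one conjugacy class of transvections or reflections that spans the underlying space. For the symplectic case with $d$ even, one invokes the classification of Janssen: transvections in a single orbit generate all of $\Sp$. For the symplectic case with $d$ odd, the Zalesskii--Serezhkin refinement gives exactly $\Spo(H^{n-1},q_X)$ once one has checked that every Picard-Lefschetz transvection preserves $q_X$. For the orthogonal case, the hypothesis $d>4$ is what guarantees enough vanishing cycles of the appropriate self-intersection to activate the analogous theorem for orthogonal groups; for $d\leq 4$ the lattice of vanishing cycles is too small and one gets trapped in a proper reflection subgroup (typically a finite Weyl group of exceptional type).

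The principal obstacle is the odd-dimensional odd-degree case: one must construct $q_X$ intrinsically enough that preservation under $T_\delta$ becomes a concrete identity for $q(\delta)$ depending on $d\bmod 2$, and then verify that the monodromy image is not pinned inside a proper subgroup of $\Spo$. A secondary annoyance is notational: the quoted statement uses Beauville's original indexing, in which $n$ denotes the hypersurface dimension rather than the ambient dimension as elsewhere in the paper, so the parity labels must be translated carefully before invoking the theorem in context.
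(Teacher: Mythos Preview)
The paper does not prove this theorem at all: it is quoted verbatim from \cite{Beauville} as background input, and the surrounding text says only ``The monodromy group of the moduli space of smooth hypersurfaces in $\P^n$ of degree $d$ was calculated in \cite{Beauville}. We state these results below.'' There is therefore nothing in the paper to compare your proposal against.

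That said, your sketch is a faithful outline of Beauville's own argument, so in that sense it is ``the same approach as the paper'' by transitivity through the citation. The one place where your write-up is slightly off is the orthogonal case: the restriction $d>4$ is not about having \emph{enough} vanishing cycles to escape a finite Weyl group, but rather that for $d=3$ (cubic surfaces, threefolds, etc.) the vanishing lattice genuinely is an exceptional root lattice and the monodromy is a proper Weyl-type subgroup, while $d=4$ is a borderline case handled separately. Also, your remark about the indexing is correct and worth keeping: in the quoted theorem $n$ is the dimension of the hypersurface, whereas in the rest of the paper $n$ is the dimension of the ambient projective space, so the parity conditions swap when the result is actually applied.
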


Therefore, the monodromy representation has large image when $d>4$, and we are reduced to checking the numerical conditions in Theorem \ref{thm: 10.1}.

\section{Effective bounds on the dimension}
    \subsection{Setup and strategy} 
We wish to check find $n$ for which the numerical conditions in Theorem \ref{thm: 10.1} hold for sufficiently large $d$.
We will use the same notation as \cite[Section 10]{LV} throughout this appendix, which we review here.  Let $X_n$ be the random variable indicating $\frac{n-1}{2}$ less than the number of descents to a uniformly distributed random permutation of $\{1, 2, \ldots, n\}$.  This is precisely the normalized distribution of the Eulerian numbers.  By \cite{Stanley}, $P(X_n =p - \frac{n-1}{2})$ is the probability that a sum of $n$ uniform, independent and identically distributed variables between 0 and 1 is between $p$ and $p+1$.  

Next, let $X'_n$ be the random variable given by a sum of two distinct copies of $X_n$.  Let $\beta_p = P(X'_n = p)$. 

The relevance of these  distributions is that the Hodge numbers of a degree $d$ hypersurface in $\P^n$ satisfy 
\[
h^{p, q}\sim \frac{d^n}{n!}A(n, p)
\]
where the Eulerian number $A(n, p)$ is the number of permutations of $[n]$ with $p$ descents.  We will need to make this approximation precise to make $d$ effective given $n$.  However, to make $n$ effective it suffices to get better bounds on these distributions.  Therefore we will use this statement now and give a proof of a stronger statement in Section \ref{sec: deg}. 

\subsection{Explicit bounds on $\beta_p$}
We will need to have an upper bound on $\beta_0$ and a lower bound on $\sum_{p>0}\beta_p$.

\begin{lemma}
\label{lem: upp}
    For all $n$, we have $\beta_0 \le \frac{\sqrt{3}}{\sqrt{n+4}}$. 
\end{lemma}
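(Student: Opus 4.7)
The plan is to reduce the bound on $\beta_0$ to a bound on the peak value of an Irwin--Hall density, which I would then estimate via Fourier inversion. By the identity of \cite{Stanley} cited in the setup, $P(X_n = k) = \int_{I_k} f_n(s)\,ds$, where $f_n$ is the density of $S_n = U_1 + \cdots + U_n$ for iid uniforms $U_i \in [0,1]$ and $I_k = [k + (n-1)/2,\ k+(n+1)/2]$ has length one. The palindromic identity $A(n, k) = A(n, n-1-k)$ makes $X_n$ symmetric about $0$, so
\[ \beta_0 = P(X_n + X_n' = 0) = \sum_k P(X_n = k)\, P(X_n = -k) = \sum_k P(X_n = k)^2. \]

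Applying Cauchy--Schwarz on each unit interval $I_k$ gives $P(X_n = k)^2 \le \int_{I_k} f_n^2$, and summing yields $\beta_0 \le \int_\R f_n(s)^2\, ds$. This integral equals the density at $0$ of $S_n - S_n'$ for an independent copy $S_n'$. Since $1 - U$ is uniform on $[0,1]$, one has $n - S_n' \stackrel{d}{=} S_n$, hence $n + (S_n - S_n') \stackrel{d}{=} S_{2n}$, which gives
\[ \beta_0 \le \int_\R f_n^2 = f_{2n}(n), \]
the peak value of the Irwin--Hall density on $2n$ summands.

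I would then bound $f_{2n}(n)$ via the characteristic function $\sin(t/2)/(t/2)$ of $U - 1/2$:
\[ f_{2n}(n) = \frac{1}{2\pi} \int_{-\infty}^\infty \left(\frac{\sin(t/2)}{t/2}\right)^{2n} dt. \]
The classical inequality $(\sin x / x)^2 \le e^{-x^2/3}$ for $|x| \le \pi$ (easily verified by comparing Taylor coefficients near $0$ and checking that the log-derivative stays negative up to $\pi$) bounds the integrand on $|t| \le 2\pi$ by $e^{-nt^2/12}$, contributing at most $\sqrt{3/(\pi n)}$ after integration and division by $2\pi$. The tail over $|t| > 2\pi$ is controlled using $|\sin(t/2)/(t/2)| \le 2/|t|$, giving a contribution of order $\pi^{-2n}$, exponentially small in $n$.

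Since $\sqrt{3/(\pi n)} \le \sqrt{3/(n+4)}$ reduces to $(\pi-1)n \ge 4$ and hence holds for all $n \ge 2$, the main term already yields the desired bound with slack to absorb the exponentially small tail. The main obstacle is the handful of smallest values of $n$ (essentially $n = 2$), where the Fourier main term is closest to $\sqrt{3/(n+4)}$ and the tail correction matters most; there one can either sharpen the constants in the Fourier estimate or verify the inequality directly from the Eulerian numbers. The sinc inequality, though standard, also deserves its own short justification.
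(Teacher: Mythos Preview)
Your argument is correct and genuinely different from the paper's. The paper never touches Fourier analysis or the Irwin--Hall density; instead it exploits two facts: the sequence $(\beta_i)$ is log-concave (Eulerian numbers are log-concave and this is preserved under convolution), and its second moment equals $(n+1)/6$. From log-concavity one shows that the tail $\sum_{i\ge k}\beta_i$ is dominated by the corresponding tail of the geometric sequence $\beta_0 t^i$ with $t$ chosen so that $\sum_{i\ge 1}\beta_0 t^i = (1-\beta_0)/2$; plugging this into the second-moment identity gives $\tfrac{n+1}{12}\le \tfrac{1-\beta_0^2}{4\beta_0^2}$, which rearranges to exactly $\beta_0\le \sqrt{3/(n+4)}$. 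Your route---Cauchy--Schwarz to pass from $\sum_k P(X_n=k)^2$ to $\int f_n^2 = f_{2n}(n)$, then the sinc bound $(\sin x/x)^2\le e^{-x^2/3}$ on $|x|\le\pi$ (which follows cleanly from $\log(\sin x/x)=\sum_k\log(1-x^2/k^2\pi^2)\le -x^2\zeta(2)/\pi^2=-x^2/6$)---actually yields the sharper leading term $\sqrt{3/(\pi n)}$, better by a factor $\sqrt{\pi}$. The trade-off is that the paper's proof is entirely elementary and lands on the stated bound with no case analysis, whereas yours needs the Fourier inversion step, the sinc inequality, and a separate check for the smallest $n$. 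Both approaches share the same harmless defect at $n=1$, where the lemma as stated is false ($\beta_0=1$) but irrelevant to the application.
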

\begin{proof}
We will use a similar strategy to that employed in Lemma 10.4 of \cite{LS}. 

We use the fact that the sequence $\beta_i$ is log-concave and satisfies $\beta_i = \beta_{-i}$.  This is true because the Eulerian numbers are log-concave and log-concavity is preserved under convolution. 
Furthermore, the second moment of $\beta_i$ is $\sum_i i^2\beta_i = \frac{n+1}{6}$ by \cite[Eqn. 10.10]{LV}.  

First, we claim that for all $k\ge 1$, we have $\sum_{i = k}^{\infty} \beta_i \le \sum_{i = k}^{\infty}\beta_0t^i$, where $\frac{2\beta_0t}{1-t} = 1-\beta_0$.  Indeed, with this choice of $t$ we have $\sum_{i > 0} \beta_0 t^i = \frac{1-\beta_0}{2} = \sum_{i>0}\beta_i$.  Thus if the claim is false for some $k$, we must have $\beta_j < \beta_0t^j$ for some $j<k$.  Then by log-concavity of the $\beta_i$ we have $\beta_i < \beta_0t^i$ for all $i > j$ and in particular for $i =k$, contradiction.  

    Therefore, we have 
    \[
\frac{n+1}{12} = \sum_{i=1}^{\infty}i^2\beta_i \le \sum_{i=1}^{\infty} i^2\beta_0t^i = \frac{t(1+t)\beta_0}{(1-t)^3} = \frac{1-\beta_0}{2}\cdot \frac{1+\beta_0}{2\beta_0}\cdot \frac{1}{\beta_0} = \frac{1-\beta_0^2}{4\beta_0^2}.
    \]
This implies $\beta_0\le \sqrt{\frac{3}{n+4}}$ as desired. 
\end{proof}

By the central limit theorem, we know that $X'(n)/\sqrt{n}$ converges to the normal distribution $N(0, \frac{1}{6})$ as $n\rightarrow\infty$.  Note that $\sum_{p>0}p\beta_p$ is half the expected absolute deviation from the mean of $X'(n)$.  The deviation from the mean of a normal distribution is given by a half-normal distribution.  Therefore, 

\[
\sum_{p>0}p\beta_p \sim \frac{1}{2} \cdot \sqrt{\frac{2}{\pi}} \cdot \sqrt{\frac{n}{6}} = \sqrt{\frac{n}{12\pi}}
\]
as $n\rightarrow \infty$.
However, to obtain a strict lower bound for $\sum_{p>0}p\beta_p$ we will need to employ additional arguments that give a strict lower bound for $E[|X'(n)|]$.  Even though the bound we use will not be optimal, its linearity will be enough for us to conclude.  

Let us first compute the expected deviation from the mean of the Irwin-Hall distribution.  Let $IH(n)$ be the distribution given by the sum of $n$ uniform independently and identically distributed variables between $[0, 1]$.    
\begin{lemma}
\label{lemma: IH}
    $E[|IH(n) - \frac{n}{2}|] \ge \frac{\sqrt{n}}{2\sqrt{6}}$. 
\end{lemma}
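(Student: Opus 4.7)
My plan is to deduce the lemma from a general inequality for symmetric log-concave distributions: any symmetric log-concave random variable $X$ on $\R$ satisfies $E[|X|] \ge \sqrt{\mathrm{Var}(X)/2}$, with equality achieved by the Laplace distribution. Specializing this to $X = IH(n) - n/2$ and using $\mathrm{Var}(X) = n/12$ gives exactly the stated bound $\sqrt{n/24} = \sqrt{n}/(2\sqrt{6})$.

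First I would write $Y_n := IH(n) - n/2 = \sum_{i=1}^n V_i$, where $V_1, \ldots, V_n$ are i.i.d.\ uniform on $[-1/2, 1/2]$. Each $V_i$ is symmetric with log-concave density, and both properties are preserved under convolution (log-concavity by the Pr\'ekopa--Leindler theorem, or by an elementary direct argument in one variable). Hence $Y_n$ has a symmetric log-concave density $f_{Y_n}$ on $\R$, and $E[Y_n^2] = n \cdot E[V_1^2] = n/12$. Folding about the origin, $|Y_n|$ then has the log-concave density $y \mapsto 2 f_{Y_n}(y)$ on $[0, \infty)$.

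The problem therefore reduces to the following statement about an arbitrary nonnegative random variable $Z$ with a log-concave density: $\mathrm{Var}(Z) \le (E[Z])^2$, equivalently $E[Z^2] \le 2(E[Z])^2$. Applied to $Z = |Y_n|$, this gives $E[Y_n^2] - (E[|Y_n|])^2 \le (E[|Y_n|])^2$, which rearranges to $(E[|Y_n|])^2 \ge E[Y_n^2]/2 = n/24$, yielding the lemma.

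The main obstacle is the coefficient-of-variation bound $\mathrm{Var}(Z) \le (E[Z])^2$ for $Z$ nonnegative with log-concave density, which is classical but not entirely trivial. The standard route is: log-concavity of the density forces the survival function $\bar G(t) = P(Z > t)$ to be log-concave as well (so $Z$ has an increasing failure rate), and for such distributions the coefficient of variation is bounded by $1$, with equality exactly for the exponential distribution (this is a classical result of Barlow--Marshall--Proschan in reliability theory). Once this is granted, the lemma follows in a single line. If one prefers a self-contained treatment, the equivalent moment bound $E[Z^2] \le 2(E[Z])^2$ for log-concave $Z$ on $[0,\infty)$ can be obtained by a direct comparison argument with the exponential distribution of the same mean.
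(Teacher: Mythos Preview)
Your argument is correct and takes a genuinely different route from the paper's. The paper proceeds by a moment interpolation: writing $Y=|IH(n)-n/2|$, it applies H\"older in the form $E[Y^2]\le E[Y]^{2/3}E[Y^4]^{1/3}$, computes $E[Y^2]=n/12$ and bounds $E[Y^4]$ explicitly, and then solves for $E[Y]$. Your approach instead exploits the structural fact that $IH(n)-n/2$ has a symmetric log-concave density, so that $|IH(n)-n/2|$ has a log-concave density on $[0,\infty)$, and then invokes the classical coefficient-of-variation bound $\Var(Z)\le (E[Z])^2$ for such $Z$ (equivalently $E[Z^2]\le 2(E[Z])^2$, the exponential/Laplace extremal case).

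What each buys: the paper's route is elementary and self-contained, needing only direct moment computations; however, with the correct fourth moment $E[Y^4]=\frac{n}{80}+\frac{n(n-1)}{48}$ (the $\frac{n(n-1)}{72}$ in the paper appears to be a slip), the H\"older step only yields $E[Y]\ge \sqrt{n}/6$, a weaker constant than the lemma states. Your route imports a nontrivial but standard result from the theory of log-concave/IFR distributions, and in return gives exactly the claimed constant $\sqrt{n}/(2\sqrt{6})$ while making transparent that equality corresponds to the Laplace law. The one thing worth making explicit in your write-up is why the folded density $y\mapsto 2f_{Y_n}(y)$ on $[0,\infty)$ is itself log-concave: this follows because $f_{Y_n}$, being symmetric and log-concave, is nonincreasing on $[0,\infty)$.
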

\begin{proof}
    Let $Z(n)$ be the distribution given by $IH(n) - \frac{n}{2}$ and let $Y(n) = |Z(n)|$.  Then by H\"older's inequality, we have $E[Y(n)]^{2/3}E[Y(n)^4]^{1/3}\ge E[Y(n)^2]$.  We have $E[Y(n)^2] = E[Z(n)^2] = \frac{n}{12}$.  We have 
    \[
E[Y(n)^4] = nE[Z(n)]^4 + 3n(n-1)E[Z(n)^2] = \frac{n}{80} + \frac{n(n-1)}{72} \le \frac{n^2}{72}.  
    \]

    Thus we have 
    \[
E[Y(n)] \ge \left(\Big(\frac{n}{12}\Big)\Big(\frac{n^2}{72}\Big)^{-1/3}\right)^{3/2} = \frac{\sqrt{n}}{2\sqrt{6}}, 
    \]
    as desired. 
\end{proof}

\begin{remark}
    Through numerical experiments, it appears very likely that the limiting value of $\sqrt{\frac{n}{6\pi}}$ is in fact a lower bound.
\end{remark}

\begin{lemma}
\label{lemma: sym}
    We have $E[|X'(n)|] \ge E(|A-B|)-1$, where $A, B$ are chosen independently from $IH(n)$.  
\end{lemma}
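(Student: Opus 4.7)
The plan is to realize $X_n$ as a rounding of $IH(n) - n/2$ via Stanley's formula, couple $X'(n)$ to $A-B$ on a common probability space, and then apply the triangle inequality to the resulting pointwise difference.

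Stanley's identity, as recalled in the excerpt, can be rewritten as
\[
P(X_n = k) \;=\; P\!\left(k - \tfrac{1}{2} \;<\; IH(n) - \tfrac{n}{2} \;<\; k + \tfrac{1}{2}\right),
\]
where $k$ ranges over $\Z$ when $n$ is odd and over $\Z + \tfrac{1}{2}$ when $n$ is even. In either case, $X_n$ may be realized as the rounding of $IH(n) - n/2$ to the nearest point of the appropriate lattice, and the resulting rounding error is at most $1/2$ in absolute value.

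Next I would use that $X_n$ is symmetric about $0$, which follows from the symmetry of $IH(n)$ about $n/2$ (or combinatorially from permutation reversal, which sends $k$ descents to $n-1-k$). Writing $X'(n) = Y_1 + Y_2$ for independent copies $Y_1, Y_2$ of $X_n$, the symmetry $-Y_2 \stackrel{d}{=} Y_2$ gives
\[
X'(n) \;\stackrel{d}{=}\; Y_1 - Y_2,
\]
so that $E|X'(n)| = E|Y_1 - Y_2|$. To bound the latter, I would couple by taking $A, B$ independent copies of $IH(n)$ and letting $Y_1, Y_2$ be the roundings of $A - n/2$ and $B - n/2$ to the appropriate lattice; then $Y_1, Y_2$ are independent with the law of $X_n$, and each rounding error is at most $1/2$ in absolute value. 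The triangle inequality gives
\[
|(Y_1 - Y_2) - (A - B)| \;\le\; 1
\]
pointwise, hence $|Y_1 - Y_2| \ge |A - B| - 1$, and taking expectations yields $E|X'(n)| \ge E|A - B| - 1$ as desired.

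The only conceptual step is the passage from the sum $Y_1 + Y_2$ to the difference $Y_1 - Y_2$ via symmetry of $X_n$: without this reduction one would instead compare $Y_1 + Y_2$ to $A + B - n$, which is not what appears in the statement. The rest is a routine coupling argument and no quantitative tail estimates on $IH(n)$ are needed, in contrast with Lemma \ref{lemma: IH}.
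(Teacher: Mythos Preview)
Your proof is correct and follows essentially the same route as the paper: couple $X'(n)$ to independent $IH(n)$ variables via Stanley's formula, invoke symmetry to pass from a sum to a difference, and bound the total rounding error by $1$. The only cosmetic differences are that the paper uses the floor function (error in $[0,1)$) and applies the symmetry on the $IH(n)$ side (replacing $B$ by $n-B$), whereas you round to the nearest lattice point (error in $[-1/2,1/2]$) and apply the symmetry on the $X_n$ side (replacing $Y_2$ by $-Y_2$); these are equivalent.
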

\begin{proof}
    By \cite{Stanley}, we can compute the distribution of $X'(n)$ by taking two independent variables $A, B$ from $IH(n)$ and setting $X'(n)$ to be $\lfloor A\rfloor + \lfloor B\rfloor -(n-1)$.  Note that by symmetry, we can replace $B$ with $n-B$.  Doing this, we see that the corresponding value of $X'(n)$ differs from $A-B$ by at most 1.  The desired result follows.
\end{proof}

\begin{lemma}
\label{lem: worse low}
    We have $\sum_{p>0}p\beta_p \ge \frac{\sqrt{n}}{4\sqrt{3}}-\frac{1}{2}$ for all $n>0$.    
\end{lemma}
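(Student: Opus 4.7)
The plan is to combine Lemma \ref{lemma: sym} with Lemma \ref{lemma: IH} via an elementary identity that rewrites the difference of two independent Irwin--Hall variables as a single shifted Irwin--Hall variable of twice the length.

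First I would use symmetry. The Eulerian distribution is symmetric about its mean, so $X_n$ is symmetric about $0$; hence $X'(n)$ is as well, giving $\beta_p=\beta_{-p}$ and
\[
\sum_{p>0}p\beta_p \;=\; \tfrac{1}{2}\,E[|X'(n)|].
\]
By Lemma \ref{lemma: sym}, it therefore suffices to show $E[|A-B|]\ge \sqrt{n}/(2\sqrt{3})$ for independent $A,B\sim IH(n)$.

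The key observation is the following. Writing $A=\sum_{i=1}^n U_i$ and $B=\sum_{i=1}^n V_i$ with all $U_i,V_j\sim U[0,1]$ mutually independent, and using that $1-V_i$ is again distributed as $U[0,1]$, the sum $A+(n-B)=\sum U_i+\sum(1-V_i)$ is a sum of $2n$ i.i.d.\ uniform variables and therefore has the distribution $IH(2n)$. Equivalently, $A-B$ has the same distribution as $IH(2n)-n$. Applying Lemma \ref{lemma: IH} with $n$ replaced by $2n$ yields
\[
E[|A-B|]\;=\;E\!\left[\left|IH(2n)-\tfrac{2n}{2}\right|\right]\;\ge\;\frac{\sqrt{2n}}{2\sqrt{6}}\;=\;\frac{\sqrt{n}}{2\sqrt{3}},
\]
and assembling the inequalities gives $\sum_{p>0}p\beta_p \ge \tfrac{1}{2}\!\left(\sqrt{n}/(2\sqrt{3})-1\right)=\sqrt{n}/(4\sqrt{3})-1/2$.

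There is no real obstacle beyond spotting the substitution $V_i\mapsto 1-V_i$; once one sees it, the bound reduces cleanly to results already established. An alternative would be to apply H\"older's inequality directly to $A-B$, using $\Var(A-B)=n/6$ together with a direct computation of $E[(A-B)^4]$, but a quick calculation shows this route yields a strictly worse constant than $1/(4\sqrt{3})$, so passing through $IH(2n)$ is the efficient choice and matches the target constant exactly.
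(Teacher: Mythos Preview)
Your proof is correct and follows essentially the same route as the paper: both reduce $\sum_{p>0}p\beta_p$ to $\tfrac{1}{2}E[|X'(n)|]$, invoke Lemma~\ref{lemma: sym}, identify $A-B$ with $IH(2n)-n$ via the symmetry $V_i\mapsto 1-V_i$, and then apply Lemma~\ref{lemma: IH} with $2n$. One minor quibble with your closing remark: since $A-B$ and $IH(2n)-n$ have the same distribution, applying H\"older's inequality directly to $A-B$ would in fact give the identical constant, not a worse one.
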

\begin{proof}
Recall that $\sum_{p>0}p\beta_p=\frac{1}{2}E[|X'(n)|]$, which by Lemma \ref{lemma: sym} is at least $\frac{1}{2}E(|A-B|)-\frac{1}{2}$ where $A, B$ are chosen independently from $IH(n)$.  By symmetry, note that $E(|A-B|) = E[|A - (n-B)|] = E[|IH(2n) -n] \ge \frac{\sqrt{2n}}{2\sqrt{6}}$.  The desired result follows.
\end{proof}

\subsection{Explicit bounds}

We can now verify the numerical conditions.  We first make the dimension $n$ effective.  To do this, we will use the fact that for fixed $n$, as $d\rightarrow\infty$ we have 
\[
h^{pq}(d) \sim \frac{d^n}{n!}A(n, p). 
\]
As stated before, we will prove a more precise version of this result in the following section. 

\begin{proposition}
\label{prop: n eff}
For all $n > 2000$, there exists some degree $d(n)$ such that the hypersurfaces of degree $d(n)$ in $\P^n$ with good reduction outside $S$ are not Zariski dense in their moduli space.  
\end{proposition}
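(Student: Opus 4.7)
The plan is to verify the two numerical conditions of Theorem \ref{thm: 10.1} for the universal family of hypersurfaces of degree $d$ in $\P^n$, fixing $n > 2000$ and choosing $d = d(n)$ sufficiently large only at the end of the argument. Combining the asymptotic $h^{p,q}(d) \sim \tfrac{d^n}{n!}A(n,p)$ with the formula (\ref{eq: hp}), the adjoint Hodge numbers satisfy
\[
h^p \;\sim\; \frac{d^{2n}}{2}\beta_p \qquad \text{as } d\to\infty,
\]
since the correction $\pm h^{(p+n-1)/2,(-p+n-1)/2}$ is only $O(d^n)$ and $\dim(Y) = O(d^n)$. After dividing through by $d^{2n}/2$, both (\ref{eqn: 1}) and (\ref{eqn: 2}) reduce, in the limit $d\to\infty$, to inequalities among the $\beta_p$; the $O(d^n)$ error terms can be absorbed at the end by taking $d(n)$ large.

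With this reduction, condition (\ref{eqn: 1}) becomes $\sum_{p>0}\beta_p \ge \beta_0$, i.e.\ $\beta_0 \le 1/3$, which follows immediately from Lemma \ref{lem: upp} once $n \ge 23$. For (\ref{eqn: 2}), Lemma \ref{lem: worse low} gives directly
\[
\sum_{p>0} p\,h^p \;\sim\; \frac{d^{2n}}{2}\sum_{p>0}p\beta_p \;\ge\; \frac{d^{2n}}{2}\Bigl(\frac{\sqrt{n}}{4\sqrt{3}} - \frac12\Bigr),
\]
so the LHS scales like $\sqrt{n}\,d^{2n}$. The real issue is to upper-bound the RHS: the naive estimate $T(E)\le p_{\max} E$ yields $T(E) \lesssim (n-2)h^0 \sim \sqrt{n}\,d^{2n}$, matching the LHS in $n$-order and therefore too weak to close.

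The key idea for the RHS is to apply Cauchy--Schwarz to the integral representation $T(E) = \int_0^E \phi(x)\,dx$, where $\phi$ is the decreasing step function appearing in the definition of $T$. For $E \le \sum_{p>0}h^p$ the integrand $\phi$ is strictly positive on $[0,E]$, so
\[
T(E)^2 \;\le\; E\cdot \int_0^E \phi(x)^2\,dx \;\le\; E\cdot \sum_{p>0}p^2 h^p.
\]
The second moment $\sum_p p^2 \beta_p = (n+1)/6$ recorded inside the proof of Lemma \ref{lem: upp} gives $\sum_{p>0}p^2 h^p \sim (n+1)d^{2n}/24$, while Lemma \ref{lem: upp} supplies $h^0 \le \tfrac{d^{2n}}{2}\sqrt{3/(n+4)}$. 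Splitting the square roots via $\sqrt{h^0 + \dim Y} + \sqrt{\tfrac32 h^0 + \dim Y} \le (1 + \sqrt{3/2})\sqrt{h^0} + 2\sqrt{\dim Y}$ and substituting, I find $T(h^0 + \dim Y) + T(\tfrac32 h^0 + \dim Y)$ scales like $n^{1/4}\,d^{2n}$.

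Comparing, the LHS scales like $\sqrt{n}\,d^{2n}$ and the RHS like $n^{1/4}\,d^{2n}$, so (\ref{eqn: 2}) holds whenever $n$ exceeds an explicit crossover; plugging in the numerical coefficients above, the crossover lies comfortably below $n = 1700$, so $n > 2000$ suffices. With $n$ so fixed, one then takes $d(n)$ large enough that the $O(d^n)$ errors in the asymptotic $h^p \sim \tfrac{d^{2n}}{2}\beta_p$ and the $\dim(Y)$ contributions inside $T$ are dwarfed by $h^0 \sim d^{2n}/\sqrt{n}$. The main obstacle in the argument is really the upper bound on $T$: the naive $p_{\max}\cdot E$ bound has the same $n$-scaling as the LHS and so cannot close, and it is only the Cauchy--Schwarz-with-second-moment step that improves the $n$-scaling from $\sqrt{n}$ to $n^{1/4}$ and allows the inequality to go through.
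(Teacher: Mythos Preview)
Your argument is correct and reaches the same conclusion, but the route differs from the paper's in one essential place: the upper bound on $T$.  The paper first reduces (via Lemma \ref{lemma: 1.26}) to the single inequality $\sum_{p>0}ph^p > 2T(1.26\,h^0)$, and then bounds $T(E)$ by splitting at a threshold $\epsilon n$: the portion with $p>\epsilon n$ is controlled by the Chebyshev-type estimate $\sum_{p>\epsilon n} p h^p \le (\epsilon n)^{-1}\sum_{p>0} p^2 h^p$, while the remaining slope-$\le \epsilon n$ piece contributes at most $\epsilon n \cdot E$.  Optimising $\epsilon$ makes both pieces of order $n^{1/4}\sqrt{H}$, which is exactly what your Cauchy--Schwarz bound $T(E)\le \sqrt{E\cdot \sum_{p>0}p^2 h^p}$ gives in one stroke, and in fact your bound is the AM--GM optimum of the paper's two-term split (tighter by a factor of $2$).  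So the two approaches use the same three ingredients (Lemmas \ref{lem: upp}, \ref{lem: worse low}, and the variance $(n+1)/6$), but your packaging via Cauchy--Schwarz is cleaner: it avoids the preliminary reduction to $2T(1.26\,h^0)$, handles the two $T$-terms directly through subadditivity of $\sqrt{\,\cdot\,}$, and yields a slightly lower crossover (near $n\approx 1700$) before rounding up to $2000$.

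One minor slip that does not affect the argument: $\dim(Y)=\binom{n+d}{n+1}-1$ is $\Theta(d^{n+1})$ for fixed $n$, not $O(d^n)$ as you wrote.  Since you only need $\dim(Y)=o(d^{2n})$ to absorb it into $h^0$, this is harmless for $n\ge 2$.
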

     
Like in \cite[Proposition 10.2]{LV}, we first simplify the conditions we wish to check. 

\begin{lemma}
\label{lemma: 1.26}
    It suffices to show $\sum_{p>0}ph^p > 2T(1.26h^0)$. 
\end{lemma}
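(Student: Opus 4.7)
The plan is to show that, for the universal hypersurface family with $n > 2000$ fixed and $d$ chosen sufficiently large, the single inequality $\sum_{p>0} p h^p > 2T(1.26\, h^0)$ suffices to verify both \eqref{eqn: 1} and \eqref{eqn: 2}. The argument rests on the concavity and monotonicity of $T$, together with the fact that $\dim Y$ is negligible compared to $h^0$ in the regime $d \to \infty$.

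First I would record directly from the definition that $T$ is non-decreasing and concave: the slopes $p_{\max}, p_{\max}-1, \ldots$ prescribed on successive intervals are non-increasing by construction. Consequently $T$ is monotone and for any $a, b \ge 0$,
\[
T(a) + T(b) \le 2T\left(\tfrac{a+b}{2}\right).
\]

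Next I would verify that $\dim Y \le 0.01\, h^0$ for $d$ large enough relative to $n$. For the moduli space of smooth degree-$d$ hypersurfaces in $\P^n$ we have $\dim Y = \binom{n+d}{d} - (n+1)^2 \sim d^n/n!$, while the asymptotic $h^{p,q} \sim \frac{d^n}{n!} A(n,p)$ combined with \eqref{eq: hp} yields $h^0 \sim \frac{d^{2n}}{2}\beta_0$, which grows like $d^{2n}$. Hence $\dim Y / h^0 \to 0$ as $d \to \infty$, so $\dim Y \le 0.01\, h^0$ for all sufficiently large $d$.

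With these ingredients, concavity and monotonicity of $T$ give
\[
T(h^0 + \dim Y) + T\left(\tfrac{3}{2}h^0 + \dim Y\right) \le 2T\left(\tfrac{5}{4}h^0 + \dim Y\right) \le 2T(1.26\, h^0),
\]
so the hypothesis $\sum_{p>0} ph^p > 2T(1.26\, h^0)$ immediately yields \eqref{eqn: 2}. For \eqref{eqn: 1}, Lemma \ref{lem: upp} gives $\beta_0 \le \sqrt{3/(n+4)} < 0.04$ when $n > 2000$, and since $\sum_{p>0} h^p / h^0$ tends to $(1-\beta_0)/(2\beta_0) > 12$ as $d \to \infty$ (by the same convolution asymptotics used for $h^0$), for $d$ large we obtain $\sum_{p>0} h^p \ge 11\, h^0 \ge h^0 + \dim Y$, verifying \eqref{eqn: 1}.

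The only substantive step is the leading-order estimate $\dim Y / h^0 \to 0$, together with the parallel fact that $\sum_{p>0} h^p / h^0$ is bounded below by a constant strictly greater than $1$; once these are in place, the rest is a short arithmetic manipulation made possible by the concavity of $T$.
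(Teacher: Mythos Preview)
Your proof is correct and follows essentially the same approach as the paper. Both arguments hinge on the concavity of $T$ together with the observation that $\dim Y / h^0 \to 0$ as $d \to \infty$; you phrase concavity via the midpoint inequality $T(a)+T(b)\le 2T\bigl(\tfrac{a+b}{2}\bigr)$ and then apply monotonicity to pass from $1.25h^0+\dim Y$ to $1.26h^0$, while the paper phrases the same step as a comparison of increments on either side of $1.26h^0$ using non-increasing slope. Your expression for $\dim Y$ differs from the paper's $\binom{n+d}{d-1}-1$, but since either formula is $O(d^{n+1})$ while $h^0$ grows like $d^{2n}$, this has no effect on the argument.
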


\begin{proof} 
The moduli space $Y$ of degree $d$ hypersurfaces in $\P^n$ has dimension 
\[
\binom{n+d}{d-1}-1 \sim \frac{d^{n+1}}{(n+1)!}. 
\]
Thus we have 
\[
\lim_{d\rightarrow\infty} \frac{\dim(Y)}{h^0} = 0. 
\]
Then the first condition 
\[
\sum_{p>0}h^p \ge h^0 +\dim(Y) 
\]
is clearly satisfied for sufficiently large $d$, in light of Lemma \ref{lem: upp}. 
The second condition states 
\[
\sum_{p>0}ph^p > T(h^0 + \dim (Y)) + T(\frac{3}{2}h^0 + \dim (Y)). 
\]
We recall that the slope of the graph of $T$ is non-increasing.  Thus as $d\rightarrow\infty$, we have 
\[
T(1.26h^0) - T(h^0+\dim(Y)) > T(\frac{3}{2}h^0 + \dim(Y)) - T(1.26h^0), 
\]
so the result follows.

\end{proof} 

\begin{proof}
[Proof of Proposition \ref{prop: n eff}] 
  Let $H = \sum_{p} h^p$ and $H_1 = \sum_{p>0} ph^p$. 
  We wish to show that $H_1 > 2T(1.26 h^0)$. 

  We know that $\beta_p\sim \frac{h^p}{H}$.  Thus by Lemma \ref{lem: worse low}, we have 
  \[
  H_1 = \sum_{p>0}Hp\beta_p + p(h^p-H\beta_p) \ge \frac{H\sqrt{n}}{4\sqrt{3}} - \frac{H}{2} + \sum_{p>0}p(h^p-H\beta_p).
  \]

  On the other hand, by Lemma \ref{lem: upp}, we have 
  \[
2T(1.26 h^0) = 2T(1.26 H\beta_0 + 1.26 (h^0 - H\beta_0)) < 2T\left(1.26 H\sqrt{\frac{3}{n+4}} + 1.26 (h^0 - H\beta_0)\right). 
  \]

  For sufficiently large $d$ we have $|h^p-H\beta_p| < \eps H$ for all $p$ and $\eps > 0$.  
  
  Thus, it suffices to show that 
    \[
 T\left(\frac{1.26 H\sqrt{3}}{\sqrt{n}}\right) <  \frac{H\sqrt{n}}{4\sqrt{3}} - \frac{H}{2}.
    \] 
    
    Let $\eps = \frac{1}{24\cdot 1.26}$.  We can separate the LHS into the contribution of Hodge numbers below $\eps n$ and those above $\eps n$.  The contribution of the Hodge numbers below $\eps n$ is at most 
    \[
    T\left(\frac{1.26 H\sqrt{3}}{\sqrt{n}}\right) - T(\eps n) \le (\eps n)\frac{1.26H\sqrt{3}}{\sqrt{n}} = 1.26H\eps\sqrt{3n}. 
    \]
    For the Hodge numbers above $\eps n$, we can use the variance bound $\sum_{p>0} p^2\beta_p = \frac{n+1}{12}$.  Then 
    \[
    T(\eps n) = \sum_{p > \eps n}pH\beta_p + \sum_{p>\eps n}(h^p - H\beta_p) < H(\eps n)^{-1}\frac{n+1}{12} + \sum_{p>\eps n}(h^p - H\beta_p) =  \frac{(n+1)H}{12 n \eps} + \sum_{p>\eps n}(h^p - H\beta_p).
    \] 
    For any $\eps' >0$, the last summand is at most $H\eps'$ for sufficiently large $d$.

    Adding these together, we see that it suffices to prove that 
\[
1.26H\eps\sqrt{3n} + \frac{(n+1)H}{12 n \eps} + H\eps' + \frac{H}{2} \le \frac{H\sqrt{n}}{4\sqrt{3}}.  
\]

By our choice of $\eps$, the first term is half of the RHS.  Thus we see that the inequality is satisfied when 
\[
\frac{2.52(n+1)}{n} + \frac{1}{2} \le \frac{\sqrt{n}}{8\sqrt{3}},
\]
$\sqrt{n}> 8\sqrt{6}\cdot 2.52\sqrt{2}\cdot \frac{n+1}{n}$, which holds when $n> 2000$. 
\end{proof}

\section{Effective bounds on both the dimension and degree}
In this section we take $d\ge n> 500000$.  We will work with the Hodge numbers directly rather than examining their asymptotic behavior as the degree goes to infinity, which will allow us to make the value of the degree effective. 

\begin{remark}
    As the bounding arguments are technical and may seem rather arbitrary, we give a brief explanation of how the bounds for $n$ and $d$ are used.  In Lemma \ref{lemma: var} we give a certain variance bound which does not require $n$ and $d$ to be particularly large, but in Lemmas \ref{lemma: var upper} and \ref{lem: upp 2} we apply it in a simplified form where the simplification requires $n$ and $d$ to be large.  We need $n$ and $d$ to be large for Corollary \ref{cor: z'd} and Lemma \ref{lemma: bounds} for similar reasons.  Next, to check the numerical conditions there is a term involving the dimension of the moduli space itself which is generally supposed to be insignificant, but if $n$ is much larger than $d$ then it can become a problem; this is why we set $d\ge n$.  Finally, in Proposition \ref{prop: n d effective} the final computation is what forces $n$ to be rather large.  With the same methods one could prove weaker versions of the previous lemmas that would not require $n$ and $d$ to be so large, but this would  weaken the final argument which would force $n$ to be larger than otherwise.  Nevertheless, it is certainly expected that closer analysis would improve the bounds. 
 Once $n$ and $d$ reach a certain point (e.g. the bound of 2000 used in the previous section), the distribution of the Hodge numbers is surely close enough to other known distributions (e.g. the one used in the previous section) so that the desired bounds hold.  However, in order to give proofs one has to have a handle on their exact values, which is why in this section we use some looser bounds that are easier to work with. 
\end{remark}

\label{sec: deg}
\subsection{Hodge numbers of hypersurfaces} 
The computation of the Hodge numbers of a smooth degree $d$ hypersurface $X\subset \P^n$ is classical.  The formula we will use for them is stated below.  
\begin{proposition}
    \cite[Corollary 17.5.4]{Arapura}
      Let $h^{p, n-1-p}(d)$ be the $p$th Hodge number of a smooth degree $d$ hypersurface $X\subset \P^n$.  Then $h^{p, n-1-p}(d)-\delta_{p, n-1-p}$ is the coefficient of $t^{(p+1)d}$ in $(t+t^2+\cdots + t^{d-1})^{n+1}$. 
\end{proposition}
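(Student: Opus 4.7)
The plan is to use the classical Griffiths residue description of the primitive middle cohomology of a smooth projective hypersurface in terms of its Jacobian ring. Let $F \in \C[x_0, \ldots, x_n]$ be a homogeneous polynomial of degree $d$ cutting out $X \subset \P^n$, and let $R_F = \C[x_0, \ldots, x_n]/J_F$ where $J_F = (\partial_0 F, \ldots, \partial_n F)$. The central input is the Griffiths isomorphism
\[
H^{p,\, n-1-p}(X)_{\prim} \;\cong\; R_F^{(n-p)d - n - 1}.
\]
I would derive this by realizing primitive classes as Poincar\'e residues of meromorphic top forms $\tfrac{G \cdot \Omega}{F^{n-p}}$ on $\P^n \setminus X$, where $\Omega = \sum_i (-1)^i x_i\, dx_0 \wedge \cdots \wedge \widehat{dx_i} \wedge \cdots \wedge dx_n$ is the Euler form; homogeneity forces $\deg G = (n-p)d - n - 1$, and the relations in $J_F$ arise from exactness modulo forms of lower pole order, via the pole-order filtration on $H^n(\P^n \setminus X, \C)$.

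Next, I would compute the Hilbert series of $R_F$. Smoothness of $X$ together with the Jacobian criterion implies that $\partial_0 F, \ldots, \partial_n F$ form a regular sequence in $\C[x_0, \ldots, x_n]$, so the Koszul complex is exact and the Hilbert series factors as
\[
\sum_{k \ge 0} (\dim R_F^k)\, t^k \;=\; \frac{(1 - t^{d-1})^{n+1}}{(1 - t)^{n+1}} \;=\; (1 + t + \cdots + t^{d-2})^{n+1}.
\]
Combining with Griffiths, the primitive Hodge number $h^{p,\,n-1-p}_{\prim}(d)$ is the coefficient of $t^{(n-p)d - n - 1}$ in this product, or equivalently the coefficient of $t^{(n-p)d}$ in $(t + t^2 + \cdots + t^{d-1})^{n+1}$ after multiplying by $t^{n+1}$. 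The latter polynomial is palindromic of total degree $d(n+1)$ about its center, so its coefficient of $t^{(n-p)d}$ equals its coefficient of $t^{d(n+1) - (n-p)d} = t^{(p+1)d}$.

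Finally, I would add the non-primitive contribution. By the Lefschetz hyperplane theorem, $H^{n-1}(X,\C)$ agrees with its primitive part except in the middle bidegree when $n-1$ is even: if $n$ is odd and $p = (n-1)/2$, the class $h^{(n-1)/2}$ lies in Hodge type $((n-1)/2,(n-1)/2)$ and contributes exactly $1$ to $h^{p,\,n-1-p}(d)$; otherwise there is no correction. This contribution equals $\delta_{p,\,n-1-p}$, yielding the stated formula.

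The main obstacle is the Griffiths isomorphism itself: its proof requires identifying the pole-order filtration on $H^n(\P^n \setminus X, \C)$ with the Hodge filtration on the primitive part of $H^{n-1}(X, \C)$, which in turn uses Bott-type vanishing on $\P^n$ and degeneration of the Hodge-to-de Rham spectral sequence. Once that input is in place, the remaining steps are routine commutative algebra for the complete intersection $R_F$ together with the palindromic symmetry of its Hilbert series.
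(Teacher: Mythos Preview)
Your argument is correct and is the standard route to this formula. Note, however, that the paper does not supply its own proof of this proposition: it is stated as a classical fact with a citation to Arapura's textbook, and the paper's contribution lies elsewhere (in the explicit estimates of Sections~3--4). The Griffiths residue calculus you outline---identifying $H^{p,n-1-p}_{\prim}(X)$ with a graded piece of the Jacobian ring via the pole-order filtration, computing the Hilbert series of $R_F$ from the Koszul resolution of a regular sequence, and then invoking the palindromic symmetry of $(t+\cdots+t^{d-1})^{n+1}$---is exactly the argument one finds in the cited source, so there is nothing to contrast.
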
 

We will work with this formula directly rather than appealing to the asymptotic behavior of the Hodge numbers so that we can obtain effective statements on the degree.  As in the previous section, we will define and compute statistics of distributions created from the Hodge numbers.

  Let $Y_d(n)$ denote the distribution of the sum of $n$ numbers randomly drawn from the set $\{\frac{1}{d}, \frac{2}{d} \ldots, \frac{d-1}{d}\}$.  Let $Z_d(n)$ denote the same distribution, except conditioning on the event that the sum is an integer.  Ultimately we need to deal with the adjoint Hodge numbers, which are given by a convolution of $Z_d(n)$ with itself (with a possible difference of 1 in $h^{\frac{n-1}{2}, \frac{n-1}{2}}$), and then adding or subtracting a smaller term.  In view of this, let $Z'_d(n)$ be the sum of two distinct copies of $Z_d(n)$.  Let $\gamma_p = P(Z_d'(n) = p)$.  Note that the dependence of $\gamma_p$ on the degree $d$ is implicit. 
  
  We will now bound statistical properties of $Z'_d(n)$ and then deal with the minor complications relating them to those of the adjoint Hodge numbers.
  
\subsection{Explicit bounds on $\gamma_p$} 
As in the previous section, we will need to have an upper bound on $\gamma_0$ and a lower bound on $\sum_{p>0}p\gamma_p$.  To achieve these, we will need some bounds on the statistics of $Y_d(n)$ and $Z_d(n)$.  

Let $\alpha_q$ be the probability of $q$ occurring in $Y_d(n)$ and let $\beta_m$ be the probability of $m$ occurring in $Z_d(n)$.  Note that $m$ is an integer while $q$ is just an integer multiple of $\frac{1}{d}$. 

\begin{lemma}
\label{lemma: recur}
    We have 
    \[
\alpha_m = \left(\frac{1}{d} + \frac{(-1)^n}{d(d-1)^{n-1}}\right) \beta_m. 
    \]
\end{lemma}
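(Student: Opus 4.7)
The plan is to prove the identity by first observing that by the definition of conditional probability, $\beta_m = \alpha_m / P(Y_d(n) \in \Z)$ for every integer $m$, so the lemma reduces to computing
\[
P(Y_d(n) \in \Z) = \frac{1}{d} + \frac{(-1)^n}{d(d-1)^{n-1}}.
\]
Rescaling by $d$, this is equivalent to computing the probability that $S = X_1 + \cdots + X_n \equiv 0 \pmod{d}$ where the $X_i$ are i.i.d.\ uniform on $\{1, 2, \ldots, d-1\}$.

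I would compute this probability by discrete Fourier analysis on $\Z/d\Z$. Writing $\omega = e^{2\pi i/d}$ and using the orthogonality relation $\mathbf{1}_{S \equiv 0} = \frac{1}{d}\sum_{k=0}^{d-1} \omega^{kS}$, we get
\[
P(S \equiv 0 \pmod d) = \frac{1}{d}\sum_{k=0}^{d-1} E[\omega^{kX_1}]^n.
\]
The $k=0$ term contributes $1$. For $k \neq 0$, the inner sum $\sum_{j=1}^{d-1}\omega^{kj} = -1$ because $\sum_{j=0}^{d-1}\omega^{kj} = 0$, so $E[\omega^{kX_1}] = -1/(d-1)$ and the corresponding term is $(-1)^n/(d-1)^n$. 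Summing over the $d-1$ nontrivial characters gives the claimed value.

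This is essentially a one-line character sum computation, so I do not expect any significant obstacle — the only thing to be careful about is matching the statement's indexing (i.e.\ that $m$ is meant to range over integers, and that rescaling $Y_d(n)$ by $d$ turns the event $\{Y_d(n) = m\}$ into the event $\{S = md\}$, which in particular lies in the divisibility class $S \equiv 0 \pmod d$). The final step is just to substitute the computed value of $P(Y_d(n) \in \Z)$ into $\alpha_m = P(Y_d(n) \in \Z) \cdot \beta_m$ and read off the claimed identity.
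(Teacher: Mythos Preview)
Your argument is correct. The reduction to computing $P(Y_d(n)\in\Z)$ via the definition of conditional probability is exactly right, and the character-sum computation is clean and complete: for $k\neq 0$ one has $\sum_{j=1}^{d-1}\omega^{kj}=-1$, hence $E[\omega^{kX_1}]^n=(-1)^n/(d-1)^n$, and summing over the $d-1$ nontrivial characters gives the stated constant.

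The paper takes a different route. Instead of Fourier analysis, it derives a one-step recursion
\[
P(Y_d(n)\in\Z)=\frac{1-P(Y_d(n-1)\in\Z)}{d-1}
\]
by conditioning on the last summand: given $Y_d(n-1)$, there is exactly one residue class mod $d$ that the $n$th draw must hit to make the total sum an integer, and that class is available in $\{1,\ldots,d-1\}$ precisely when $Y_d(n-1)\notin\Z$. Setting $q_n=P(Y_d(n)\in\Z)-\tfrac{1}{d}$ turns this into $q_n=-q_{n-1}/(d-1)$ with $q_1=-1/d$, which solves to the same expression. Your Fourier approach is arguably more direct and would generalize immediately to other support sets or non-uniform weights; the paper's recursion is slightly more elementary (no roots of unity) and stays closer to the probabilistic picture, but neither method has a real advantage for this particular lemma.
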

\begin{proof}
    By definition, $\alpha_m = P(d|Y_d(n))\beta_m$.  Note that we have 
    \[
P(d|Y_d(n)) = \frac{1-P(d|Y_d(n-1))}{d-1}. 
    \]
    Letting $q_n = P(d|Y_d(n))-\frac{1}{d}$, we obtain the recurrence relation 
    \[
q_n = -\frac{q_{n-1}}{d-1}.
    \] 
    Then because $q_1 = -\frac{1}{d}$, the result follows. 
\end{proof} 
For convenience, let $c_d = P(d|Y_d(n))$ (with the dependence on $n$ implicit); we have shown that $c_d$ is numerically close to $\frac{1}{d}$.  

The next lemma will allow us to give an upper bound on $\gamma_0$.
\begin{lemma}
\label{lemma: var}
    The variance of $Z'_d(n)$ satisfies $\Var(Z'_d(n)) \ge  \frac{n(d-3)}{6d} - \sqrt{3n}$. 
\end{lemma}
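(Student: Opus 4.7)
The plan is to reduce to a problem about $Y_d(n-1)$ via a coupling and then apply the law of total variance together with a flooring estimate. Since $Z'_d(n)$ is the sum of two independent copies of $Z_d(n)$, we have $\Var(Z'_d(n)) = 2\Var(Z_d(n))$, so it suffices to show $\Var(Z_d(n)) \ge \frac{n(d-3)}{12d} - \frac{\sqrt{3n}}{2}$. The key observation is: given $X_1,\ldots,X_{n-1}$ i.i.d.\ uniform on $\{1,\ldots,d-1\}$ with $s = \sum_{i<n}X_i \not\equiv 0 \pmod d$, there is a unique $X_n \in \{1,\ldots,d-1\}$ making $\sum_{i=1}^n X_i \equiv 0 \pmod d$, namely $X_n = d - (s \bmod d)$, and a one-line calculation gives $\tfrac{1}{d}\sum_i X_i = \lfloor s/d \rfloor + 1 = \lfloor Y_d(n-1) \rfloor + 1$. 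So $Z_d(n) \stackrel{d}{=} \lfloor Y_d(n-1) \rfloor + 1$ conditional on the event $A = \{Y_d(n-1) \notin \Z\}$, hence $\Var(Z_d(n)) = \Var(\lfloor Y_d(n-1) \rfloor \mid A)$.

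For any real random variable $X$ with $\Var(X) \ge 1/4$, applying Minkowski's inequality to $X = \lfloor X\rfloor + \{X\}$ (and noting $\Var(\{X\}) \le 1/4$ since $\{X\}\in [0,1)$) gives $\sqrt{\Var(X)} \le \sqrt{\Var(\lfloor X \rfloor)} + 1/2$, i.e.\ $\Var(\lfloor X\rfloor) \ge \Var(X) - \sqrt{\Var(X)}$. Using $\Var(X_1) = d(d-2)/12$ for $X_1$ uniform on $\{1,\ldots,d-1\}$, we have $\Var(Y_d(n-1)) = (n-1)(d-2)/(12d)$, so
\[
\Var(\lfloor Y_d(n-1) \rfloor) \ge \frac{(n-1)(d-2)}{12d} - \sqrt{\frac{(n-1)(d-2)}{12d}}.
\]

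To pass to the conditional variance, I apply the law of total variance to the binary event $A$ and solve for $\Var(\lfloor Y_d(n-1)\rfloor \mid A) = \Var(Z_d(n))$. Lemma~\ref{lemma: recur} gives $P(A^c) \le 2/d$; the mean-discrepancy satisfies $|\mu_A - \mu_{A^c}| = E[\{Y_d(n-1)\}]/P(A) \le 2$, so $(\mu_A - \mu_{A^c})^2$ is bounded; and $\Var(\lfloor Y_d(n-1)\rfloor \mid A^c) = \Var(Z_d(n-1))$ can be upper bounded by $n/12 + O(\sqrt n)$ via the reverse flooring inequality $\Var(\lfloor X\rfloor) \le (\sqrt{\Var(X)} + 1/2)^2$ combined with another application of the total-variance identity. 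Multiplying by $P(A^c) \le 2/d$ and using $d\ge n$ shows the total conditioning correction is $O(1)$.

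Combining these estimates with the algebraic identity $(n-1)(d-2)/(12d) = n(d-3)/(12d) + (n-d+2)/(12d)$ (where $(n-d+2)/(12d) \ge -1/12$ for $d \ge n$) gives $\Var(Z_d(n)) \ge n(d-3)/(12d) - \sqrt{n/12} - O(1)$, and doubling produces $\Var(Z'_d(n)) \ge n(d-3)/(6d) - \sqrt{n/3} - O(1)$. For $n \ge 500000$ the gap $\sqrt{3n} - \sqrt{n/3} = 2\sqrt{n}/\sqrt{3}$ easily absorbs the $O(1)$ correction, yielding the desired bound. The main obstacle is the self-referential nature of the upper bound on $\Var(Z_d(n-1))$; this is handled by the reverse-flooring estimate, which closes the loop without circular reasoning because the bound enters multiplied by $P(A^c) = O(1/d)$, so only a crude estimate is needed.
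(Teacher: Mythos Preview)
Your argument is correct under the standing hypotheses $d \ge n > 500000$ of the section, but it is genuinely different from the paper's proof. The paper compares $Z_d(n)$ directly to $Y_d(n)$ (same $n$): using $\alpha_m = c_d\beta_m$ and the monotonicity of the $\alpha_q$, it bounds $\Var(Y_d(n)) = 2\sum_{q>0} q^2\alpha_q$ above by $2dc_d\sum_{p>0}(p+1)^2\beta_p + 1$, expands $(p+1)^2$, and controls the cross term $\sum_p p\beta_p$ by Cauchy--Schwarz; solving the resulting quadratic inequality for $V_d = \Var(Z_d(n))$ gives the bound with no appeal to $d \ge n$ or to $n$ being large. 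Your route instead drops one summand and uses the exact coupling $Z_d(n) \stackrel{d}{=} \bigl(\lfloor Y_d(n-1)\rfloor + 1\bigr)\mid A$, then controls the flooring loss by Minkowski and removes the conditioning by the law of total variance. The coupling is elegant and yields a sharper main error ($\sqrt{n/3}$ before corrections, versus the paper's $\sqrt{3n}$), but it introduces the correction term $P(A^c)\,\Var(Z_d(n-1)) \approx n/(12d)$, which you need $d \gtrsim n$ to make $O(1)$; the paper's approach avoids this and, as noted in its remark, proves the lemma without any size restriction on $n,d$. One minor point: your identity $|\mu_A - \mu_{A^c}| = E[\{Y_d(n-1)\}]/P(A)$ implicitly uses the symmetry $E[Y_d(n-1)\mid A] = E[Y_d(n-1)\mid A^c] = (n-1)/2$, which is true but worth stating; with it, the bound is in fact $\le 1$.
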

\begin{proof}
    We have $\Var(Z'_d(n)) = 2\Var(Z_d(n))$.  To compute $\Var(Z_d(n))$, we first compute $\Var(Y_d(n))$.  By direct computation, it is given by 
    \[
\Var(Y_d(n)) = \frac{2n}{d^2(d-1)}\sum_{i=1}^{\lfloor(d-1)/2\rfloor}i^2 = \begin{cases}
    \frac{n(d+1)}{12d} & d \text{ odd} \\ 
    \frac{n(d-2)}{12d} & d \text{ even}.
\end{cases}
    \]

We have that $\Var(Z_d(n)) = 2\sum_{p>0}{p^2}\beta_p$.  In order to give a lower bound on this, we compare this to the known value of  
\[
\Var(Y_d(n)) = 2\sum_{q>0} q^2\alpha_q. 
\]
Note that here $q$ ranges over integer multiples of $\frac{1}{d}$.  We have 
\[
2dc_d\sum_{p>0}(p+1)^2 \beta_p + 1 > 2\sum_{q>0}q^2\alpha_q. 
\]
This is because multiplying by $c_d$ turns $\beta_p$ into $\alpha_p$, multiplying by $d$ and using $(p+1)^2$ instead of $p^2$ accounts for the contribution to the variance of $q\in [p, p+1)$, and the 1 accounts for the contribution of $q<1$. 

Now we have 
\[
\sum_{p>0}(p+1)^2\beta_p = \sum_{p>0}p^2\beta_p+ \sum_{p>0}(2p+1)\gamma_p,
\] 
and furthermore by Cauchy-Schwarz we have  $2\sum_{p>0}p\beta_p\le \sqrt{2\sum_{p>0}p^2\beta_p}$. 

For convenience, letting $V_d = \Var(Z_d(n)) = 2\sum_{p>0}p^2\beta_p$, we have 
\[
dc_d(V_d + 2\sqrt{2}\sqrt{V_d}+ 1) + 1 > \Var(Y_d(n)). 
\]
We can then give a bound of $V_d\ge \frac{n(d-3)}{12d} - \frac{\sqrt{3}}{2}\sqrt{n}$, so $\Var(Z'_d(n)) \ge  \frac{n(d-3)}{6d} - \sqrt{3n}$ as desired. 
\end{proof}

Later in the final argument, we will also use an upper bound for this variance.  The following upper bound can be proven in the same way, by analyzing $\sum_{p>0}(p-1)^2\beta_p$ rather than $\sum_{p>0}(p+1)^2\beta_p$.   We state it separately so that it is clear where each statement is used. 

\begin{lemma}
\label{lemma: var upper}
    We have $\Var(Z'_d(n)) \le  \frac{n}{5.1}$.
\end{lemma}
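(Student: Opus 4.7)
The plan is to mirror the argument of Lemma \ref{lemma: var}, but replacing the upper estimate $q^2 \le (p+1)^2$ on intervals $[p,p+1)$ by the mirror lower estimate $q^2 \ge (p-1)^2$ on intervals $[p-1,p)$. Whereas the earlier argument produced an upper bound on $\Var(Y_d(n))$ in terms of $V_d=\Var(Z_d(n))$ and hence a lower bound on $V_d$, the mirrored version will produce a lower bound on $\Var(Y_d(n))$ in terms of $V_d$, which then yields the desired upper bound on $V_d$ (and so on $\Var(Z'_d(n))=2V_d$).

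The key input is that the PMF $\alpha_q$ of $Y_d(n)$, viewed as a function of $q$ on the lattice $\frac{1}{d}\mathbb{Z}$, is log-concave and symmetric about the mean $n/2$. Indeed $\alpha_q$ is an $n$-fold convolution of the uniform law on $\{1/d,\ldots,(d-1)/d\}$, and log-concavity on a lattice is preserved under convolution. Hence $\alpha_q$ is non-increasing in the absolute deviation from the mean. Writing $q\ge 0$ from now on for this deviation, for each positive integer $p$ and lattice point $q\in[p-1,p)$ we get $\alpha_q\ge \alpha_p = c_d\beta_p$; summing the $d$ lattice points in $[p-1,p)$ and then over $p\ge 1$ (the $p=1$ contribution is harmless, since $(p-1)^2$ vanishes there),
\[
\Var(Y_d(n)) \;=\; 2\sum_{q>0}q^2\alpha_q \;\ge\; 2dc_d\sum_{p>0}(p-1)^2\beta_p.
\]

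Expanding $(p-1)^2=p^2-2p+1$ and using $V_d=2\sum_{p>0}p^2\beta_p$, $\sum_{p>0}\beta_p=(1-\beta_0)/2$, and the Cauchy--Schwarz estimate $\sum_{p>0}p\beta_p\le \sqrt{V_d}/2$, this becomes
\[
\Var(Y_d(n)) \;\ge\; dc_d\bigl(V_d-2\sqrt{V_d}\bigr)
\]
after dropping a nonnegative remainder. By Lemma \ref{lemma: recur}, $dc_d=1+(-1)^n/(d-1)^{n-1}$ is effectively $1$ in our range, and the explicit formula recalled in Lemma \ref{lemma: var} gives $\Var(Y_d(n))\le n(d+1)/(12d)\le n/12 + O(1)$ when $d\ge n$. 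Solving the resulting quadratic inequality in $\sqrt{V_d}$ yields $V_d\le n/12+O(\sqrt{n})$, and hence $\Var(Z'_d(n))=2V_d\le n/6+O(\sqrt{n})$.

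The final step is to verify that this is at most $n/5.1$ for $n>500000$. Since $\frac{n}{5.1}-\frac{n}{6}=\frac{n}{34}$ and the error term is of order $\sqrt{n/12}$ with small explicit constant, this is already satisfied for $n$ in the low thousands, leaving a comfortable margin at the required threshold (e.g.\ for $n=500000$, $\sqrt{n/12}<210$ while $n/34>14000$). The main delicate point, though not a serious obstacle, is the monotonicity step: one has to be slightly careful about whether the mean $n/2$ actually lies on $\frac{1}{d}\mathbb{Z}$, but in every parity case $\alpha_q$ is unimodal about the (possibly half-integer) mean and the argument goes through after reindexing the lattice accordingly.
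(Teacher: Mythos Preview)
Your proposal is correct and follows exactly the approach the paper indicates: the paper does not write out the proof but simply says it ``can be proven in the same way, by analyzing $\sum_{p>0}(p-1)^2\beta_p$ rather than $\sum_{p>0}(p+1)^2\beta_p$,'' and that is precisely what you do, obtaining $\Var(Y_d(n))\ge dc_d(V_d-2\sqrt{V_d})$ and hence $\Var(Z'_d(n))=2V_d\le n/6+O(\sqrt n)\le n/5.1$ in the stated range. Your remark about the parity of $n$ (whether the mean $n/2$ sits on the lattice) is a point the paper glosses over in both this lemma and Lemma~\ref{lemma: var}, and your handling of it is adequate.
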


We can now give an upper bound on $\gamma_0$. 

\begin{lemma} 
\label{lem: upp 2}
We have $\gamma_0 \le \frac{\sqrt{12.8}}{2\sqrt{n}}$.
\end{lemma}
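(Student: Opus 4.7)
The plan is to follow the template of Lemma \ref{lem: upp}, substituting the exact identity $\sum_i i^2 \beta_i = (n+1)/6$ used there with the variance lower bound for $Z'_d(n)$ proved in Lemma \ref{lemma: var}.

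First I would record the two structural properties of $\{\gamma_p\}$ that drive the original argument. Symmetry $\gamma_p = \gamma_{-p}$ is immediate: $Z_d(n)$ is symmetric about its integer mean, so the sum of two independent copies used to define $Z'_d(n)$ is symmetric about $0$. Log-concavity of $\{\gamma_p\}$ requires slightly more justification. The mass function of $Y_d(n)$, rescaled to the integer lattice, is proportional to the coefficients of $(t+t^2+\cdots+t^{d-1})^n$, which form a log-concave sequence (the coefficients of a product of polynomials with nonnegative log-concave coefficients are log-concave). Passing to $Z_d(n)$ subsamples every $d$-th term of this sequence; log-concavity is preserved under subsampling at arithmetic progressions, since concavity of $i \mapsto \log a_i$ at the three evenly spaced points $(k-1)d, kd, (k+1)d$ gives $a_{kd}^2 \ge a_{(k-1)d}a_{(k+1)d}$. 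Finally, convolution of two copies of $Z_d(n)$ preserves log-concavity of the discrete mass function, so $\{\gamma_p\}$ is log-concave.

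Given these two properties, the geometric-domination step of Lemma \ref{lem: upp} carries over with essentially no change. Choosing $t$ with $\frac{2\gamma_0 t}{1-t} = 1-\gamma_0$ forces $\sum_{p\ge 1}\gamma_0 t^p$ to match the total tail mass $\sum_{p\ge 1}\gamma_p$, and the same contradiction-by-log-concavity argument shows $\sum_{p\ge k}\gamma_p \le \sum_{p\ge k}\gamma_0 t^p$ for every $k\ge 1$. Since $p^2 = \sum_{k=1}^p(2k-1)$, Abel summation propagates these tail bounds through to
\[
\sum_{p\ge 1} p^2 \gamma_p \;\le\; \sum_{p\ge 1} p^2 \gamma_0 t^p \;=\; \frac{1-\gamma_0^2}{4\gamma_0^2}.
\]
Combined with the symmetry of $\{\gamma_p\}$ and Lemma \ref{lemma: var}, which yield $\sum_{p>0} p^2 \gamma_p = \tfrac{1}{2}\Var(Z'_d(n)) \ge \frac{n(d-3)}{12d} - \frac{\sqrt{3n}}{2}$, rearrangement gives
\[
\gamma_0^2 \;\le\; \left(\frac{n(d-3)}{3d} - 2\sqrt{3n} + 1\right)^{-1}.
\]

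To close, I would invoke the standing hypothesis $d\ge n>500000$: $d\ge n$ gives $\frac{n(d-3)}{3d}\ge \frac{n}{3}-1$, and $2\sqrt{3n}$ is negligible compared to $n/3$ once $n>500000$, which leaves enough slack to conclude $\gamma_0^2 \le 3.2/n$, i.e.\ $\gamma_0 \le \sqrt{12.8}/(2\sqrt{n})$. I expect the main obstacle to be the log-concavity of $\{\gamma_p\}$: log-concavity of the coefficients of $(t+\cdots+t^{d-1})^n$ and of convolutions is standard, but the subsampling step---that looking only at coefficients in an arithmetic progression of common difference $d$ preserves log-concavity---is the least routine ingredient and is the place I would be most careful to justify explicitly. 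Everything else is a direct transcription of Lemma \ref{lem: upp} with a slightly weaker (but still linear-in-$n$) variance bound.
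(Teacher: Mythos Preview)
Your proposal is correct and matches the paper's proof essentially line for line: the paper also invokes log-concavity of $Y_d(n)$ via convolution, then the fact that taking every $d$th term preserves log-concavity to get $Z_d(n)$ (hence $Z'_d(n)$) log-concave, and then reruns the geometric-domination argument of Lemma~\ref{lem: upp} with the variance bound from Lemma~\ref{lemma: var} and the standing hypothesis $d\ge n\ge 500000$ to reach $\gamma_0\le \sqrt{12.8}/(2\sqrt{n})$. Your concern about the subsampling step is well placed but unnecessary---your one-line argument via concavity of $i\mapsto \log a_i$ at three evenly spaced points is exactly right, and it is the same fact the paper asserts without proof.
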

\begin{proof}
    We note that since the convolution of log-concave distributions is log-concave, then $Z_d(n)$ is log-concave.  Taking every $d$th element of a log-concave sequence preserves log-concavity.  Thus by the argument in the proof of Lemma \ref{lem: upp} and the variance calculation in Lemma \ref{lemma: var}, we have 
    \[
\frac{n(d-3)}{12d} - \sqrt{3n} \le \frac{1-\gamma_0^2}{4\gamma_0^2}. 
    \]
    Because we are taking $d\ge n\ge 500000$, the left hand side is at least $\frac{n}{12.8}$.  Making this simplification, we obtain $\gamma_0\le \frac{\sqrt{12.8}}{2\sqrt{n}}$ as desired. 
\end{proof}

Next, computing $\sum_{p>0}p\gamma_p$ is tantamount to computing the expected deviation of $Z'_d(n)$ from the mean.  We first do this for $Y_d(n)$ and $Z_d(n)$.  By definition, we have 
\[
E[|Y_d(n) - \frac{n}{2}|] = 2 \sum_{q>\frac{n}{2}} \left(q-\frac{n}{2}\right)\alpha_q.  
\]
while 
\[
E[|Z_d(n) - \frac{n}{2}|] = 2 \sum_{m>\frac{n}{2}} \left(m-\frac{n}{2}\right)\beta_m,  
\]

We will compare these using the fact that both distributions are increasing until $\frac{n}{2}$ and decreasing afterwards. 

  \begin{lemma}
  \label{lemma: Yd}
      We have 
      \[
      E[|Y_d(n) - \frac{n}{2}|] \ge  E[|IH(n) - \frac{n}{2}|] - \frac{n}{2d}.
      \] 
  \end{lemma}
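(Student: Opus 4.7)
The plan is to construct a coupling between $IH(n)$ and $Y_d(n)$ on a common probability space, bound $|Y_d(n) - IH(n)|$ in expectation, and then invoke the reverse triangle inequality
\[
|Y_d(n) - n/2| \ge |IH(n) - n/2| - |Y_d(n) - IH(n)|.
\]
First I would take i.i.d.\ uniforms $U_1, \dots, U_n$ on $[0,1]$ so that $\sum_i U_i = IH(n)$, and set $V_i := (\lfloor (d-1) U_i \rfloor + 1)/d$. A short check shows that each $V_i$ is uniform on $\{1/d, 2/d, \dots, (d-1)/d\}$ and that the pairs $(U_i, V_i)$ are independent across $i$, so $\sum_i V_i$ has the law of $Y_d(n)$; in particular the means match, $E[V_i] = E[U_i] = 1/2$.

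Next I would verify two quantitative facts about $V_i - U_i$. Pointwise, if $U_i \in [j/(d-1), (j+1)/(d-1))$ then $V_i - U_i = (d-1-j)/(d(d-1)) - (U_i - j/(d-1))$, which gives $|V_i - U_i| \le 1/d$. In expectation, a direct integration over each interval $I_j$ and summation yields the sharp identity
\[
E[|V_i - U_i|] = \frac{2d-1}{6 d(d-1)},
\]
which is bounded by $\tfrac{1}{2d}$ for every $d \ge 2$ (with equality at $d=2$, making the whole lemma tight there).

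From this the rest is routine: the ordinary triangle inequality gives
\[
E[|Y_d(n) - IH(n)|] \le \sum_i E[|V_i - U_i|] \le \frac{n}{2d},
\]
and inserting this into the reverse triangle inequality above (then taking expectations) gives the claim. The only nontrivial step is the computation of $E[|V_i - U_i|]$; it is straightforward but slightly tedious because the sign of $V_i - U_i$ changes inside each interval $I_j$. A cleaner alternative in our asymptotic regime is to note that the $V_i - U_i$ are i.i.d.\ with mean zero and satisfy $(V_i - U_i)^2 \le 1/d^2$, so Cauchy-Schwarz yields $E[|Y_d(n) - IH(n)|] \le \sqrt{n\Var(V_1 - U_1)} \le \sqrt{n}/d$, which is already smaller than $n/(2d)$ whenever $n \ge 4$ and sidesteps the integration entirely.
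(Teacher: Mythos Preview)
Your proof is correct and uses the same coupling as the paper: realize $Y_d(n)$ by replacing each uniform $U_i$ by $V_i = (\lfloor (d-1)U_i\rfloor + 1)/d$ and apply the reverse triangle inequality. The paper stops at the pointwise bound $|V_i - U_i|\le 1/d$ and in fact concludes with $n/d$ rather than the $n/(2d)$ stated in the lemma; your additional computation $E[|V_i - U_i|] = (2d-1)/(6d(d-1))\le 1/(2d)$ is exactly what recovers the sharper constant, so in this spot your write-up is a bit more complete than the paper's. The Cauchy--Schwarz alternative you mention is also valid for $n\ge 4$ and is a nice shortcut in the regime the paper cares about.
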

  \begin{proof}
  Note that the distribution $Y_d(n)$ can be obtained from $IH(n)$ in the following way: for each of the $n$ numbers $\alpha\in (0, 1)$, replace $\alpha$ with $\frac{\lfloor(d-1)\alpha\rfloor+1}{d}$. 
 This changes $\alpha$ by no more than $\frac{1}{d}$, so we have 
\[
E[|Y_d(n)-\frac{n}{2}|] \ge E[|IH(n) - \frac{n}{2}|] - \frac{n}{d}
\]
as desired.
  \end{proof}

\begin{lemma}
\label{lemma: Zd}
    We have 
    \[
E[|Z_d(n) - \frac{n}{2}|] \ge \left(1-\frac{1}{(d-1)^{n-1}}\right)E[|Y_d(n) - \frac{n}{2}|] - 1.
    \]
\end{lemma}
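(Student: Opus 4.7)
The plan is to reduce the claim to a discretization-type inequality via Lemma \ref{lemma: recur}, then establish that inequality using a block decomposition that exploits the log-concavity of $\alpha_q$ (which is preserved under convolution from the uniform distribution on $\{1, \ldots, d-1\}$). Using $\beta_m = \alpha_m/c_d$ from Lemma \ref{lemma: recur} gives $c_d E[|Z_d(n) - n/2|] = \sum_{m \in \Z}|m - n/2|\alpha_m$, and the formula $c_d = 1/d + (-1)^n/(d(d-1)^{n-1})$ combined with $1/(1+x) \ge 1-x$ for $x \ge 0$ yields $1/(dc_d) \ge 1 - 1/(d-1)^{n-1}$ in both parities of $n$. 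So it suffices to establish
\[
d \sum_{m \in \Z}|m - n/2|\alpha_m \;\ge\; \sum_q |q - n/2|\alpha_q \;-\; dc_d,
\]
where $q$ ranges over the support of $Y_d(n)$; dividing through by $dc_d$ then gives the lemma.

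To prove this inequality, I would partition the support into blocks of $d$ consecutive $1/d$-spaced points, each containing exactly one integer. For integer $m > n/2$ take $B_m = \{m + j/d : 0 \le j \le d-1\}$, and for $m < n/2$ take $B_m = \{m - j/d : 0 \le j \le d-1\}$, so that in each case all $q \in B_m$ lie on the same side of the mode $q = n/2$ as $m$ and $\alpha_q \le \alpha_m$ by unimodality. A short direct computation then yields a block bound
\[
\sum_{q \in B_m}|q - n/2|\alpha_q \;\le\; \alpha_m\bigl[d|m - n/2| + (d-1)/2\bigr],
\]
so after subtracting $d|m-n/2|\alpha_m$ each block leaves a deficit of $\alpha_m(d-1)/2$; summing over all $m$ and using $\sum_m \alpha_m = c_d$ bounds this contribution by $(d-1)c_d/2$.

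The main delicate point, and likely the hard part, is handling the unit interval around the mode not covered by these blocks. For $n$ odd this interval contains no integer; for $n$ even it is two units wide and contains $n/2$, where the block bound degenerates since $|n/2 - n/2|\alpha_{n/2} = 0$. In both cases unimodality bounds the direct contribution from the $1/d$-spaced points in this region by $\alpha_{\text{mode}}(d-1)$, so the total deficit is $\le (d-1)(c_d + \alpha_{\text{mode}})/2 \le (d-1)c_d < dc_d$ using the crude bound $\alpha_{\text{mode}} \le c_d$ (equivalently $\beta_{\text{mode}} \le 1$). A sharper $\beta_{\text{mode}} = O(1/\sqrt{n})$ bound (via an analogue of Lemma \ref{lem: upp 2} applied to $Z_d(n)$) would improve the constant in front of the $-1$, but is not needed for the stated inequality.
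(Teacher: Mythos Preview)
Your proposal is correct and follows essentially the same approach as the paper: both arguments use the identical block decomposition $B_m$ anchored at integers $m$ on either side of $n/2$, exploit unimodality to get the per-block bound $\sum_{q\in B_m}|q-n/2|\alpha_q \le \alpha_m[d|m-n/2|+(d-1)/2]$, handle the central width-$1$ or width-$2$ window crudely, and convert between $\alpha_m$ and $\beta_m$ via Lemma~\ref{lemma: recur}. Your only organizational difference is that you pull out the factor $1/(dc_d)\ge 1-1/(d-1)^{n-1}$ at the outset and then prove a pure $\alpha$-inequality, whereas the paper interleaves the $\alpha\to\beta$ conversion with the block estimate; one minor imprecision is that for $n$ odd the equivalence ``$\alpha_{\text{mode}}\le c_d \Leftrightarrow \beta_{\text{mode}}\le 1$'' is not literal since the mode need not lie at an integer, but in that parity the central window has width $1$ and the required bound on $\alpha_{\text{mode}}$ is weaker, so this does not affect the argument.
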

\begin{proof}
Let $\parity(n)\in \{0, 1\}$ denote the parity of $n$. First we will bound the contribution to $E[|Y_d(n) - \frac{n}{2}|]$ coming from when $|Y_d(n) - \frac{n}{2}| < 1-\frac{\parity(n)}{2}$.  Even with the extremely crude bound obtained by letting the probability of the above event be bounded above by 1, we obtain that this contribution to the expected deviation from the mean is no more than 1.  

Next, we will compare the contribution to $E[|Z_d(n) - \frac{n}{2}|]$ coming from an integer $m>\frac{n}{2}$ to the contribution to $E[|Y_d(n) - \frac{n}{2}|]$ coming from $m+\frac{i}{d}$ for $0\le i\le d-1$.  For convenience, write $p =m-\frac{n}{2}$.  Because $\alpha_m \ge \alpha_{m+\frac{i}{d}}$, we have
    \[
\sum_{i=0}^{d-1} (p+\frac{i}{d})\alpha_{m+\frac{i}{d}} \le pd\alpha_{m} + \frac{d-1}{2}\alpha_m
    \]
    Next, by Lemma \ref{lemma: recur} we have 
    \[
\beta_m \ge \left(1-\frac{1}{(d-1)^{n-1}}\right)d\alpha_m. 
    \] 
    
    Summing over all $m$, we obtain 
    \[
E[|Y_d(n) - \frac{n}{2}|] \le 2\left(1-\frac{1}{(d-1)^{n-1}}\right)^{-1}\sum_{m>\frac{n}{2}} \left(\left(m - \frac{n}{2}\right)\beta_m + \frac{d-1}{2}\alpha_m\right). 
    \]

    In the given range of $n$ and $d$, the $\frac{d-1}{2}\alpha_m$ is less than $\frac{\beta_m}{2}$.  Then since $\sum_m \beta_m =1$, we obtain 
    \[
E[|Z_d(n) - \frac{n}{2}|] \ge \left(1-\frac{1}{(d-1)^{n-1}}\right)E[|Y_d(n) - \frac{n}{2}|] - 1,
    \]
    as desired. 
\end{proof}

Finally, to obtain a lower bound on $E[|Z'_d(n) - n|]$ we will simply use the crude bound $E[|Z'_d(n) - n|] > E[|Z_d(n) - \frac{n}{2}]$.  Even though one would expect it to be $\sqrt{2}$ times this in the limit, having a bound weaker by a constant factor will suffice for our purposes.

\begin{corollary}
\label{cor: z'd}
    We have $E[|Z'_d(n) - n|] >  \frac{\sqrt{n}}{5.2}$.
\end{corollary}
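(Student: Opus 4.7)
The plan is to chain together the three lemmas already established for the distributions $IH(n)$, $Y_d(n)$, and $Z_d(n)$, and then pass to $Z'_d(n)$ via the crude bound the author flags in the sentence immediately preceding the corollary.

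First I would justify the crude inequality $E[|Z'_d(n) - n|] \ge E[|Z_d(n) - n/2|]$. The uniform distribution on $\{1/d, 2/d, \ldots, (d-1)/d\}$ is symmetric about $1/2$, so $Y_d(n)$ is symmetric about $n/2$; the symmetry sends integer values to integer values, so conditioning on being integer-valued preserves it, and $Z_d(n) - n/2$ is a symmetric random variable. Writing $Z'_d(n) - n = A + B$ with $A, B$ i.i.d.\ copies of $Z_d(n) - n/2$, and conditioning on $B$, Jensen's inequality applied to $E[|A + B| \mid B]$ gives $E[|A+B| \mid B] \ge |E[A + B \mid B]| = |B|$, so $E[|Z'_d(n) - n|] \ge E[|B|] = E[|Z_d(n) - n/2|]$.

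Next I would apply Lemmas \ref{lemma: Zd}, \ref{lemma: Yd}, and \ref{lemma: IH} in sequence to obtain
\[
E[|Z'_d(n) - n|] \ge \Bigl(1 - \frac{1}{(d-1)^{n-1}}\Bigr)\Bigl(\frac{\sqrt{n}}{2\sqrt{6}} - \frac{n}{d}\Bigr) - 1.
\]
Since $d \ge n \ge 500000$, the factor $(d-1)^{-(n-1)}$ is negligibly small, and since $d \ge n$ the term $n/d$ is at most $1$. So the right-hand side simplifies to at least $\sqrt{n}/(2\sqrt{6}) - 2 - \epsilon$ for some extremely small $\epsilon$.

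Finally, I would perform the numerical check: $1/(2\sqrt{6}) \approx 0.2041$ while $1/5.2 \approx 0.1923$, giving a slack of roughly $0.0118$ per unit of $\sqrt{n}$ between the two coefficients. For $n \ge 500000$ we have $\sqrt{n} \ge 707$, so this slack exceeds $8$, which comfortably absorbs the additive loss of roughly $2$. Hence $E[|Z'_d(n) - n|] > \sqrt{n}/5.2$ as claimed. The only real obstacle is bookkeeping — no new idea is needed beyond combining the earlier lemmas and verifying that the threshold $n \ge 500000$ is large enough for the leading $\sqrt{n}$ term to dominate the additive constants arising from the $-1$ in Lemma \ref{lemma: Zd} and the $-n/d$ in Lemma \ref{lemma: Yd}.
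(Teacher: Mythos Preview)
Your proposal is correct and follows essentially the same route as the paper: invoke the crude bound $E[|Z'_d(n)-n|]\ge E[|Z_d(n)-n/2|]$, then chain Lemmas~\ref{lemma: Zd}, \ref{lemma: Yd}, \ref{lemma: IH}, and finish with a numerical check at $n\ge 500000$. Your Jensen/conditioning justification of the crude bound is a nice addition the paper omits, and your use of $-n/d$ (from the proof of Lemma~\ref{lemma: Yd}) rather than $-n/(2d)$ (from its statement) only costs you an extra $1/2$ in the additive constant, which your numerical slack easily absorbs.
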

\begin{proof}
    By Lemmas \ref{lemma: Yd}, \ref{lemma: Zd}, and \ref{lemma: IH}, we have 
    \begin{align*}
E[|Z'_d(n) - n|] &> \left(1-\frac{1}{(d-1)^{n-1}}\right)E[|IH(n) - \frac{n}{2}|] - \frac{3}{2} \\ 
& \ge \left(1-\frac{1}{(d-1)^{n-1}}\right)\frac{\sqrt{n}}{2\sqrt{6}} - \frac{3}{2} \\ 
& \ge \frac{\sqrt{n}}{5.2},
    \end{align*}
as desired. 
\end{proof}

\subsection{Estimate of distribution of the adjoint Hodge numbers}
We now need to compare the distribution $\beta_p$ with the actual distribution of the adjoint Hodge numbers.  We recall the formula \cite[p. 57]{LV}
\begin{equation}
\label{eq: hp2}
2h^p = \sum_{p_1+p_2 = p+n-1} h^{p_1, n-1-p_1}h^{p_2, n-1-p_2}\pm h^{(p+n-1)/2, (-p+n-1)/2}
\end{equation}
with the sign depending on the intersection form.  In both cases, we want to compare these distributions to the one obtained through convolution of the Hodge numbers without the extra minor term.  That is, to the distribution defined by  
\begin{equation}
\label{eq: hp'}
h'^p = \sum_{p_1+p_2 = p+n-1} h^{p_1, n-1-p_1}h^{p_2, n-1-p_2}. 
\end{equation}
Let the sum of all the adjoint Hodge numbers in these cases be $H$ and $H'$ respectively, so that the distributions are given by $\frac{h^p}{H}$ and $\frac{h'^{p}}{H'}$. 

The difference between these distributions is extremely minor in the range of $n$ and $d$ we choose.  Indeed, note that each $h^p$, after dividing by 2, differs from $h'^p$ by less than the sum of all the Hodge numbers, while the sum of all the $h'^p$ is given by the square of the sum of all the Hodge numbers.  The sum of all the Hodge numbers is given by $c_d(d-1)^{n+1}$ (or 1 more in the case of odd $n$).  Thus for each $p$, the magnitude of the difference between $\frac{h^p}{H}$ and $\frac{h'^{p}}{H'}$ can be bounded by something on the order of $\frac{1}{d^n}$.  With this in mind, we can give slightly weaker bounds. 

Let $H_1 = \sum_{p>0} ph^p$.  We want an upper bound for $\frac{h^0}{H}$ and a lower bound for $\frac{H_1}{H}$.  By Lemma \ref{lem: upp 2} and Corollary \ref{cor: z'd} and the discussion above, we have the following bounds. 

\begin{lemma}
\label{lemma: bounds}
For $d\ge n\ge 40000$, we have 
    \[
h^0 < \frac{\sqrt{13}H}{2\sqrt{n}} 
    \]
    and
    \[
H_1 > \frac{H\sqrt{n}}{11}.
    \]
\end{lemma}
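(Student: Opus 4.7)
The plan is to lift the probabilistic bounds on $Z'_d(n)$ from Lemma~\ref{lem: upp 2} and Corollary~\ref{cor: z'd} to the adjoint Hodge numbers, using the observation (sketched in the paragraph preceding the statement) that $h^p/H$ is an extremely small perturbation of $h'^p/H'$, which in turn equals a $\gamma$-type distribution essentially on the nose.

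First I would make the identification of $h'^p/H'$ with the centered $Z'_d$-distribution explicit. By the hypersurface Hodge formula, $h^{q,n-1-q}$ (up to the $\delta_{q,n-1-q}$ term) is the number of $(n+1)$-tuples in $\{1,\ldots,d-1\}^{n+1}$ summing to $(q+1)d$, so dividing by $S:=c_d(d-1)^{n+1}$ gives $P(Z_d(n+1)=q+1)$. Convolving as in~\eqref{eq: hp'} and reindexing by $p=p_1+p_2-(n-1)$ shows $h'^p/H'=P(Z'_d(n+1)-(n+1)=p)$, i.e.\ exactly the $\gamma_p$-distribution of the previous subsection with $n$ replaced by $n+1$. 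The effect of this unit shift on the bounds $\sqrt{12.8}/(2\sqrt n)$ and $\sqrt n/10.4$ is benign, being a factor of $1+O(1/n)$ which will be subsumed into our numerical slack.

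Second I would quantify the error $|h^p/H-h'^p/H'|$. From~\eqref{eq: hp2}, each $h^p$ differs from $h'^p$ by at most $\tfrac{1}{2}h^{(p+n-1)/2,(-p+n-1)/2}\le S/2$, and summing over $p$ (each Hodge number appearing at most once on the right) yields $|H-H'|\le S/2$. Since $H'=S^2$, writing $h^p/H-h'^p/H'=(h^p-h'^p)/H+h'^p(H'-H)/(HH')$ gives the pointwise bound $|h^p/H-h'^p/H'|=O(1/S)=O(d^{-n})$, which is astronomically smaller than the $O(n^{-1/2})$-scale quantities to be established.

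Finally I would assemble the two inequalities. For the upper bound, $h^0/H$ agrees with $\gamma_0$ up to the $n\to n+1$ shift and an $O(d^{-n})$ pointwise error, and Lemma~\ref{lem: upp 2} gives $\gamma_0\le\sqrt{12.8}/(2\sqrt n)$; the gap $\sqrt{13}-\sqrt{12.8}>0$ easily absorbs both corrections for $d\ge n\ge 40000$. For the lower bound, $H_1/H=\sum_{p>0}ph^p/H$ agrees with $\sum_{p>0}p\gamma_p=\tfrac{1}{2}E[|Z'_d(n)-n|]>\sqrt n/10.4$ (Corollary~\ref{cor: z'd}) up to an error bounded by $O(n^2/d^n)$ after weighting by $p$, and the gap $1/10.4-1/11>0$ comfortably swallows this. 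The main obstacle is not conceptual but arithmetical bookkeeping: one must verify that the parity-dependent middle term in~\eqref{eq: hp2} together with the $n\to n+1$ shift are genuinely dominated by the chosen numerical slack, which reduces to the trivial inequality $d^{-n}\ll n^{-1/2}$ for $d\ge n\ge 40000$.
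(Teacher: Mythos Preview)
Your approach is exactly the paper's: the lemma is stated there as an immediate consequence of Lemma~\ref{lem: upp 2}, Corollary~\ref{cor: z'd}, and the perturbation discussion in Section~4.3, and you have correctly fleshed this out (including the $n\mapsto n+1$ shift the paper glosses over). One small slip to fix: equation~\eqref{eq: hp2} gives $2h^p = h'^p \pm (\cdots)$, not $h^p = h'^p \pm (\cdots)$, so your claim $|h^p - h'^p|\le S/2$ is false as written (indeed $h'^p\approx 2h^p$ can be of order $S^2/\sqrt n \gg S$); the repair is to compare $2h^p/(2H)=h^p/H$ with $h'^p/H'$ using $|2h^p - h'^p|\le S$ and $|2H - H'|\le S$, after which your $O(1/S)$ bound on $|h^p/H - h'^p/H'|$ and the rest of the argument go through unchanged.
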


\subsection{Effectivity for both dimension and degree} 
We can now run the argument in the previous section again to obtain an effective statement on Zariski non-density.  The first place this occurs is Lemma \ref{lemma: 1.26}, which reduces the numerical conditions to showing that 
\[
\sum_{p>0}ph^p > 2T(1.26 h^0). 
\]
At some additional cost to the sharpness of our results, we will instead show that 
\begin{equation}
\label{eq: weak ineq}
    \sum_{p>0}ph^p > 2T(2h^0).
\end{equation}
for our choices of $n$ and $d$.  Indeed, recall that in order to apply Theorem \ref{thm: 10.1}, the numerical conditions to be checked are 
\[
\sum_{p>0} h^p\ge h^0+\dim(Y), \quad \sum_{p>0} ph^p\ge T(h^0+\dim(Y)) + T\left( \frac{3}{2}h^0 + \dim(Y)\right). 
\]
Then if $\dim(Y)\le \frac{h^0}{2}$, then showing inequality \ref{eq: weak ineq} is clearly sufficient.  But we have $\dim(Y)=\binom{n+d}{d-1}-1$.  On the other hand, we have 
\[
h^0 \ge \frac{1}{2n}\left(\sum_{p}h^{p,n-1-p}\right)^2 \pm  \eps
\] 
by Cauchy-Schwarz, where the $\eps$ accounts for the extra minor terms dealt with in the previous subsection.  In any case, since $\sum_p h^{p, n-1-p}$ is given by $c_d(d-1)^{n+1}$ (or 1 more in the case of odd $n$) we see that $h^0 \gg 2\dim(Y)$ for the values of $n$ and $d$ we are looking at.  Thus it remains to check that $\sum_{p>0}ph^p > 2T(2h^0)$. 

\begin{proposition}
\label{prop: n d effective}
    For all $d\ge n>500000$, the hypersurfaces of
degree $d$ in $\P^n$ with good reduction outside $S$ are not Zariski dense in their moduli space. 
\end{proposition}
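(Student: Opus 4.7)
The strategy mirrors the proof of Proposition \ref{prop: n eff}, but with all asymptotic ingredients replaced by the explicit inequalities of this section. By the discussion preceding the proposition, it suffices to verify the inequality
\[
H_1 = \sum_{p>0} p h^p \;>\; 2T(2h^0)
\]
in the range $d\ge n>500000$, since the condition $\dim(Y)\le h^0/2$ has already been observed to hold there.

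The plan is to feed the two bounds from Lemma \ref{lemma: bounds} into the split-range estimate for $T$. From that lemma, $2h^0 < \sqrt{13}\,H/\sqrt{n}$ and $H_1 > H\sqrt{n}/11$, so it is enough to prove
\[
T\!\left(\tfrac{\sqrt{13}\,H}{\sqrt{n}}\right) \;<\; \frac{H\sqrt{n}}{22}.
\]
As in the previous section, I would split the left-hand side into contributions from adjoint Hodge numbers indexed by $p>\eps n$ and those indexed by $p\le \eps n$, for a small parameter $\eps$ to be chosen. The contribution from small indices is bounded trivially by the slope $\le \eps n$ of $T$ there, giving at most $\eps H\sqrt{13n}$. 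The contribution from large indices is controlled by a Chebyshev-type inequality applied to the variance bound of Lemma \ref{lemma: var upper}: since $\sum_p p^2 h^p$ is close to $H\sum_p p^2\gamma_p \le Hn/5.1$ (up to the negligible error terms controlled in the discussion before Lemma \ref{lemma: bounds}), one gets an estimate of the form $H/(10.2\,\eps)$ up to a small additive fudge.

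Summing these two pieces reduces the desired inequality to something of the shape
\[
\eps H\sqrt{13n} + \frac{H}{10.2\,\eps} + (\text{minor error}) \;<\; \frac{H\sqrt{n}}{22},
\]
and optimizing in $\eps$ (the balance point is $\eps \asymp (13n)^{-1/4}/\sqrt{10.2}$) yields a lower bound on $n$ of roughly the form $n > 13\cdot 44^4/10.2^2$, which is comfortably below $500000$. After that one checks that the minor error terms absorbed along the way (the corrections between $h^p$ and $H\gamma_p$ coming from Equation \ref{eq: hp2} versus Equation \ref{eq: hp'}, and the constant slack in the variance and mean-deviation bounds) are indeed negligible at the chosen scale.

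The main obstacle I expect is careful bookkeeping of the constants: the bounds in Lemma \ref{lemma: bounds}, Lemma \ref{lemma: var upper}, and the estimate for the minor correction terms each contribute a multiplicative or additive slack, and these have to be combined so that the final inequality actually holds at $n=500000$ rather than at some larger threshold. Since the arithmetic is delicate but straightforward, the key is to pick $\eps$ so that the dominant term $\eps H\sqrt{13n}$ eats only a definite fraction (say one-half) of the budget $H\sqrt{n}/22$, leaving enough room for the Chebyshev tail and the absorbed error terms. Once this is arranged, the inequality follows by a short computation with the threshold $n>500000$.
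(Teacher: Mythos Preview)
Your proposal is correct and follows essentially the same route as the paper's proof: reduce to $T(\sqrt{13}\,H/\sqrt{n})<H\sqrt{n}/22$ via Lemma~\ref{lemma: bounds}, split $T$ at a threshold $\eps n$, bound the low part by the slope and the high part by the second-moment (Chebyshev) estimate from Lemma~\ref{lemma: var upper}. The only cosmetic difference is that the paper fixes $\eps=1/(44\sqrt{13})$ so that the low-index term consumes exactly half of the budget $H\sqrt{n}/22$, whereas you optimize $\eps$ by AM--GM; both choices land just under the $500000$ threshold.
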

\begin{proof}
  We wish to show that $H_1 > 2T(2 h^0)$, and by Lemma \ref{lemma: bounds} we have $h^0 < \frac{\sqrt{13}H}{2\sqrt{n}} $
    and $H_1 > \frac{H\sqrt{n}}{11}$.
  Thus it suffices to show that 
  \[
T\left(\frac{\sqrt{13}H}{\sqrt{n}}\right) < \frac{H\sqrt{n}}{22}. 
  \]
  
   Let $\eps = \frac{1}{44\sqrt{13}}$.  We can separate the LHS into the contribution of Hodge numbers below $\eps n$ and those above $\eps n$.  The contribution of the Hodge numbers below $\eps n$ is at most 
    \[
    T\left(\frac{\sqrt{13}H}{\sqrt{n}}\right) - T(\eps n) \le (\eps n)\frac{\sqrt{13}H}{\sqrt{n}} = \sqrt{13}H\eps\sqrt{n}. 
    \]
    For the Hodge numbers above $\eps n$, we can use a bound on $\sum_{p>0} p^2\frac{h^p}{H}$.  Recall that In Lemma \ref{lemma: var upper}, we showed that $\Var(Z'_d(n)) \le  \frac{n}{5.1}$. From the analysis of the previous subsection on the closeness of the distributions $\frac{h'^p}{H'}$ and $\frac{h}{H}$, it is clear that we have $\sum_{p>0} p^2\frac{h^p}{H}< \frac{n}{10}$. 
    Then we have 
    \[
    T(\eps n) = \sum_{p > \eps n}ph^p < H(\eps n)^{-1}\frac{n}{10}  =  \frac{H}{10 \eps}.
    \] 

    Adding these together, we see that it suffices to prove that 
\[
\sqrt{13}H\eps\sqrt{n} + \frac{H}{10 \eps} < \frac{H\sqrt{n}}{22}.  
\]

By our choice of $\eps$, the first term is half of the RHS.  Thus the inequality is satisfied when 
\[
\frac{\sqrt{n}}{44} > \left(\frac{1}{10\eps}\right) = \frac{44\sqrt{13}}{10}
\]
which holds when $n> 500000$, as desired. 

\end{proof}

\printbibliography

@InProceedings{Beauville,
author="Beauville, Arnaud",
editor="Grauert, Hans",
title="Le groupe de monodromie des familles universelles d'hypersurfaces et d'intersections completes",
booktitle="Complex Analysis and Algebraic Geometry",
year="1986",
publisher="Springer Berlin Heidelberg",
address="Berlin, Heidelberg",
pages="8--18",
isbn="978-3-540-39829-5"
}

@article{Baldi,
title = {On the Geometric Zilber–Pink theorem and the Lawrence–Venkatesh method},
journal = {Expositiones Mathematicae},
volume = {41},
number = {3},
pages = {718-722},
year = {2023},
issn = {0723-0869},
doi = {https://doi.org/10.1016/j.exmath.2023.05.001},
url = {https://www.sciencedirect.com/science/article/pii/S0723086923000439},
author = {Gregorio Baldi and Bruno Klingler and Emmanuel Ullmo},
keywords = {Integral points, Lang and Bombieri–Lang conjectures, Hodge locus},
abstract = {Using our recent results on the algebraicity of the Hodge locus for variations of Hodge structures of level at least 3, we improve the results of Lawrence–Venkatesh in direction of the refined Bombieri–Lang conjecture.}
}

@Article{Faltings,
author={Faltings, G.},
title={Endlichkeitss{\"a}tze f{\"u}r abelsche Variet{\"a}ten {\"u}ber Zahlk{\"o}rpern},
journal={Inventiones mathematicae},
year={1983},
month={Oct},
day={01},
volume={73},
number={3},
pages={349-366},
issn={1432-1297},
doi={10.1007/BF01388432},
url={https://doi.org/10.1007/BF01388432}
}

@misc{LS,
      title={The Shafarevich conjecture for hypersurfaces in abelian varieties}, 
      author={Brian Lawrence and Will Sawin},
      year={2020},
      eprint={2004.09046},
      archivePrefix={arXiv},
      primaryClass={math.NT}
}

@article{Stanley,
title = {Eulerian partitions of a unit hypercube},
journal = {Higher Combinatorics},
volume = {},
number = {},
pages = {49},
year = {1977},
issn = {},
doi = {},
url = {},
author = {Richard Stanley}
}

@book{Arapura, title={Algebraic Geometry over the Complex Numbers}, ISBN={9781461418092}, ISSN={2191-6675}, url={http://dx.doi.org/10.1007/978-1-4614-1809-2}, DOI={10.1007/978-1-4614-1809-2}, journal={Universitext}, publisher={Springer US}, author={Arapura, Donu}, year={2012} }

@article {LV,
    AUTHOR = {Lawrence, Brian and Venkatesh, Akshay},
     TITLE = {Diophantine problems and {$p$}-adic period mappings},
   JOURNAL = {Invent. Math.},
  FJOURNAL = {Inventiones Mathematicae},
    VOLUME = {221},
      YEAR = {2020},
    NUMBER = {3},
     PAGES = {893--999},
      ISSN = {0020-9910,1432-1297},
   MRCLASS = {11G35 (11F80 11J89 14G05)},
  MRNUMBER = {4132959},
MRREVIEWER = {Carlos\ de Vera-Piquero},
       DOI = {10.1007/s00222-020-00966-7},
       URL = {https://doi.org/10.1007/s00222-020-00966-7},
}
\end{document}